\documentclass[a4paper,11pt]{article}

\usepackage[utf8]{inputenc}
\usepackage[T1]{fontenc}
\usepackage{textcomp}
\usepackage{lmodern}
\usepackage{amsmath,amsthm,esint}
\usepackage{graphicx}
\usepackage[textwidth=135mm, textheight=195mm]{geometry}
\usepackage[english]{babel}

\newtheorem{lemma}{Lemma}
\newtheorem{proposition}{Proposition}
\newtheorem{remark}{Remark}
\newtheorem{theorem}{Theorem}

\newcommand{\R}{\mathbf{R}}
\newcommand{\eps}{\epsilon}
\newcommand{\dx}{\,dx}
\newcommand{\dy}{\,dy}

\newcommand{\ds}{\,ds}
\newcommand{\dt}{\,dt}
\newcommand{\dsigma}{\,d\sigma}
\newcommand{\dtau}{\,d\tau}
\newcommand{\bigpars}[1]{\bigl(#1\bigr)}
\newcommand{\Bigpars}[1]{\Bigl(#1\Bigr)}
\newcommand{\biggpars}[1]{\biggl(#1\biggr)}
\newcommand{\abs}[1]{\lvert#1\rvert}
\newcommand{\biggabs}[1]{\biggl\lvert#1\biggr\rvert}
\newcommand{\norm}[1]{\lVert#1\rVert}
\newcommand{\set}[1]{\{#1\}}
\newcommand{\bigset}[1]{\bigl\{#1\bigr\}}

\newcommand{\lip}{\operatorname{lip}}
\newcommand{\dist}{\operatorname{dist}}
\newcommand{\linspan}{\operatorname{span}}
\DeclareMathOperator*{\argmin}{arg\,min}
\newcommand{\Set}[2]{\left\lbrace#1\ ;\ #2\right\rbrace}
\newcommand{\Scal}[2]{\left\langle#1\,,\,#2\right\rangle}
\newcommand{\smallParagraph}[1]{\paragraph{\textnormal{\textsl{#1}}}}

\newenvironment{dcases}%
{\everymath{\displaystyle\everymath{}}\begin{cases}}%
{\end{cases}}

\graphicspath{{img/}}

\title{A modified phase field approximation for mean curvature flow with
conservation of the volume}
\author{Elie Bretin \& Morgan Brassel\\[2ex]
LJK-IMAG, UMR 5523 CNRS,\\
51 rue des Math\'ematiques, B.P. 53, 38041 Grenoble cedex 9, France\\[1ex]
\texttt{elie.bretin@imag.fr} (corresponding author)\\
\texttt{morgan.brassel@imag.fr}
}
\date{April 2009}

\begin{document}

\maketitle

\begin{abstract}
This paper is concerned with the motion of a time dependent hypersurface
$\partial \Omega(t)$ in $\R^d$ that evolves with a normal velocity
\[
  V_n = \kappa - \fint_{\partial \Omega(t)} \kappa \dsigma,
\]
where $\kappa$ is the mean curvature of $\partial \Omega(t)$, and
$\fint_I$ stands for $\frac{1}{\abs{I}} \int_I$. Phase field
approximation of this motion leads to the nonlocal Allen--Cahn equation
\[
  \partial_t u = \Delta u - \frac{1}{\eps^2} W'(u)
    + \frac{1}{\eps^2} \fint_Q W'(u) \dx,
\]
where $Q$ is an open box of $\R^d$ containing $\partial \Omega(t)$ for all
$t$. We propose a modified version of this equation:
\[
  \partial_t u = \Delta u - \frac{1}{\eps^2} W'(u)
    + \frac{1}{\eps^2} \sqrt{2 W(u)}
    \biggpars{\int_Q \sqrt{2 W(u)} \dx}^{-1} \int_Q W'(u) \dx,
\]
and we show that it has better volume preserving properties than the
classical one, even in the presence of an additional forcing term $g$.
\end{abstract}

\section{Introduction and motivation}
%=======================================================================

In the last decades, a lot of work has been devoted to motions of
interfaces, and particularly to motion by mean curvature. Applications
concern image processing (denoising, segmentation), material sciences
(motion of grain boundaries in alloys, crystal growth), biology
(modelling of vesicles and blood cells).

In this paper, we are interested in phase field equations as an
approximation to motion by mean curvature with a forcing term and a
volume constraint.

For $t$ in $[0,T]$, let $\Omega(t)$ denote the evolution by mean curvature
with a forcing term of a smooth bounded domain $\Omega_0$ in $\R^d$.
More precisely, the normal velocity $V_n$, with normal $n$ pointing
towards the exterior of $\Omega(t)$, is given at a point $x$ of $\partial
\Omega(t)$ by
\begin{equation}
\label{eq:mc_motion_forcing}
  V_n = \kappa + g,
\end{equation}
where $\kappa$ denotes the mean curvature at $x$, with the convention
that $\kappa$ is negative if the set is convex, and where $g = g(x,t)$
is a given smooth forcing term. In this work, we only consider smooth
motions, which are well-defined if $T$ is sufficiently small
\cite{Ambrosio2000}. Singularities may develop in finite time, however,
and one may need to consider evolutions in the sense of viscosity
solutions \cite{Barles1994, Evans1992}.

The evolution of $\Omega(t)$ is closely related to the minimization of the
following energy:
\[
  J(\Omega) = \int_{\partial \Omega} 1 \dsigma - \int_\Omega g \dx.
\]
Indeed, one can view \eqref{eq:mc_motion_forcing} as a first order
optimality condition for this energy. The functional $J$ can be
approximated by a Ginzburg--Landau energy \cite{Modica1977,
Modica1977a}:
\[
  J_\eps(u)
  = \int_{\R^d} \biggpars{\frac{\eps}{2} \abs{\nabla u}^2
    + \frac{1}{\eps} W(u)} \dx - c_W \int_{\R^d} g u \dx,
\]
where $\eps$ is a small parameter, $W$ a double well potential with
wells at $0$ and $1$, for example $W(s) = \frac{1}{2} s^2
(1 - s)^2$, and where
\[
  c_W = \int_0^1 \sqrt{2 W(s)} \ds.
\]
Modica and Mortola \cite{Modica1977, Modica1977a} have shown the
$\Gamma$-convergence of $J_\eps$ to $c_W J$ in $L^1(\R^d)$ in the
absence of forcing terms (see also \cite{Bellettini1997}). The extension
of these results to motions with bounded forcing terms is
straightforward. The corresponding Allen--Cahn equation
\cite{Allen1979}, obtained as the gradient flow of $J_\eps$, reads
\begin{equation}
\label{eq:ac_forcing}
  \partial_t u
  = \Delta u - \frac{1}{\eps^2} W'(u) + \frac{1}{\eps} c_W g.
\end{equation}
This equation is usually solved in a fixed box $Q$ of $\R^d$, which
contains the motion $\Omega(t)$ for all $t$ in $[0,T]$. Existence, uniqueness
and a comparison principle have been established for this equation (see
for example chapters 14 and 15 in \cite{Ambrosio2000}). To this
equation, one usually associates the profile
\[
  q = \argmin \Set{\int_\R \biggpars{\frac{1}{2} {\gamma'}^2
    + W(\gamma)} \ds}
  {\gamma \in V},
\]
where $V$ is the space of functions in $H^1_{loc}(\R)$ that satisfies $\gamma(-\infty) = 1$,
$\gamma(+\infty) = 0$, $\gamma(0) = \frac{1}{2}$.
For $t$ in $[0,T]$, the motion $\Omega(t)$ can be approximated by that
of
\[
  \Omega_\eps(t) = \Set{x \in \R^d}{u_\eps(x,t) \ge \frac{1}{2}},
\]
where $u_\eps$ solves \eqref{eq:ac_forcing} with the initial condition
\[
  u_\eps(x,0) = q\biggpars{\frac{d(x, \Omega_0)}{\eps}}.
\]
Here $d(x, \Omega)$ denotes the signed distance of a point $x$ to the
set $\Omega$. The convergence of $\partial \Omega_{\eps}(t)$ to
$\partial \Omega(t)$ has been proved for smooth motions \cite{Chen1992,
Bellettini1995} and in the general case without fattening
\cite{Barles1994, Evans1992}. The rate of convergence has been proven to
be $O(\eps^2 \abs{\log \eps}^2)$. Actually, a formal asymptotic
expansion shows that $u_\eps$ behaves like
\begin{equation}
\label{eq:dev_asymp_u}
  u_\eps(x,t)
  = q\biggpars{\frac{d\bigpars{x, \Omega_\eps(t)}}{\eps}}
    + \eps g(x,t) \eta\biggpars{\frac{d\bigpars{x, \Omega_\eps(t)}}{\eps}}
    + O(\eps^2),
\end{equation}
where $\eta$ is defined as the solution in $H^2_{loc}(\R)$, with
polynomial growth, of
\begin{equation}
\label{eq:def_eta}
\begin{cases}
  \eta'' - W''(q) \eta = -c_W + q', \\
  \eta(0) = 0.
\end{cases}
\end{equation}
When $g = 1$, the modified profile $s \mapsto q_\eps(s) = q(s) + \eps
\eta(s)$ (see figure \ref{fig:profiles}) can be evaluated at $s = \pm
\infty$, where it takes the respective values
\[
  \eps \frac{c_W}{W''(1)}
  \quad \text{and} \quad
  1 + \eps \frac{c_W}{W''(0)}.
\]
These values correspond to the positions of the wells of a modified
double well potential $W_{\eps, g}$, defined by $W'_{\eps, g} = W' -
\eps c_W g$ and $W_{\eps, g}(0) = 0$.

\begin{figure}[htbp]
\centering
\includegraphics[width=.45\linewidth]{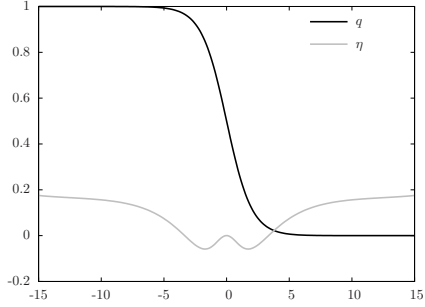}
\caption{Profile of the function $s \mapsto q(s) + \eps \eta(s)$ when
$W(s) = \frac{1}{2} s^2 (1-s)^2$.}
\label{fig:profiles}
\end{figure}

Our main interest is the numerical simulation of interfaces $\partial
\Omega(t)$ evolving from $\partial \Omega_0$ with normal velocity given by
\begin{equation}
\label{eq:mc_conserved_forcing}
  V_n = \kappa + g - \fint_{\partial \Omega(t)} (\kappa + g) \dsigma.
\end{equation}
In this case, it is easy to see that the volume of $\Omega(t)$,
\[
  \abs{\Omega(t)} = \int_{\Omega(t)} 1 \dx,
\]
remains constant in time. For instance, using the results in
\cite{Simon1980}, one may check that the shape derivative of the volume
is zero. The usual strategy to approximate
\eqref{eq:mc_conserved_forcing} is based on the remark that the mass
\[
  \int_{\R^d} u_\eps \dx
\]
is a good approximation of the volume $\abs{\Omega(t)}$. One can then add
to the Allen--Cahn equation an extra forcing term $\lambda(t)$,
independent of $x$, in order to impose the conservation of mass. This
leads to the following equation:
\[
  \partial_t u
  = \Delta u - \frac{1}{\eps^2} \Bigpars{W'(u) - \eps c_W g}
  + \frac{1}{\eps} c_W \lambda.
\]
The forcing term $\lambda$ can be viewed as a Lagrange multiplier
associated to the volume constraint. It can be determined by integrating
the equation over $Q$, which gives
\[
  \lambda = \frac{\eps}{c_W}
  \fint_Q \frac{1}{\eps^2} \Bigpars{W'(u) - \eps c_W g} \dx.
\]
In the case where $g = 0$, the previous equation reduces to
\begin{equation}
\label{eq:ac_conserved}
  \partial_t u = \Delta u - \frac{1}{\eps^2} W'(u)
    + \fint_Q \frac{1}{\eps^2} W'(u) \dx,
\end{equation}
which is the classical Allen--Cahn conserved equation (see
\cite{Rubinstein1992} and \cite{Bronsard1997}). Formally, one can think
of this equation as an approximation to motion by mean curvature with a
modified forcing term $g_\eps(t)$, independent of $x$ and given by
\[
  g_\eps = \frac{\eps}{c_W}
    \fint_Q \frac{1}{\eps^2} W'(u) \dx.
\]
In view of expansion \eqref{eq:dev_asymp_u}, one expects solutions of
\eqref{eq:ac_conserved} to behave like
\[
  u_\eps(x,t)
  = q\biggpars{\frac{d\bigpars{x, \Omega_\eps(t)}}{\eps}}
    + \eps g_\eps(t) \eta \biggpars{\frac{d\bigpars{x, \Omega_\eps(t)}}{\eps}}
    + O(\eps^2).
\]
By integration over $Q$, one sees that (see proposition
\ref{prop:volume} further)
\[
  \int_Q u_\eps \dx
  = \abs{\Omega_\eps(t)}
  + \eps g_\eps
    \int_Q \eta \biggpars{\frac{d\bigpars{x, \Omega_\eps(t)}}{\eps}} \dx
  + O(\eps^2).
\]
As the mass of $u_\eps$ is conserved, as $g_\eps = O(1)$, and as
\[
  \int_Q \eta \biggpars{\frac{d\bigpars{x, \Omega_\eps(t)}}{\eps}} \dx = O(1),
\]
given the values of $\eta$ at $\pm \infty$, one expects that
\[
  \abs{\Omega_\eps(t)} = \abs{\Omega_0} + O(\eps)
\]
only. This is not satisfactory for many applications, where loss of
volume during numerical computations strongly affects the dynamics.

The aim of this work is to propose another phase field model that has
better volume conservation properties than the conserved Allen--Cahn
equation. The paper is organised as follow:

In section \ref{sec:new_model}, we introduce the following phase field
approximation for mean curvature flow with a forcing term:
\begin{equation}
\label{eq:ac_forcing_new}
  \partial_t u = \Delta u - \frac{1}{\eps^2}
    \Bigpars{W'(u) - \eps \sqrt{2 W(u)} g}.
\end{equation}
It can be seen as the gradient flow of
\[
  \tilde{J}_\eps(u)
  = \int_{\R^d} \biggpars{\frac{\eps}{2} \abs{\nabla u}^2
    + \frac{1}{\eps} W(u)} \dx - \int_{\R^d} G(u) g \dx,
\]
with
\[
  G(s) = \int_0^s \sqrt{2 W(t)} \dt.
\]
We first explain via a formal asymptotic analysis why solutions of
\eqref{eq:ac_forcing_new} are expected to take the form
\begin{equation}
\label{eq:asymp_u_eps}
  u_\eps(x,t) = q\biggpars{\frac{d\bigpars{x, \Omega_\eps(t)}}{\eps}} + O(\eps^2).
\end{equation}
Then, following an argument due to \cite{Bellettini1995}, we rigorously
prove the convergence of this phase field equation to the motion
\eqref{eq:mc_motion_forcing}.

In section \ref{sec:conserv_vol}, we consider the evolution $\Omega(t)$
of a smooth bounded domain $\Omega_0$ according to
\[
  V_n = \kappa + g - \fint_{\partial \Omega} (\kappa + g) \dsigma.
\]
Let $u_\eps$ be a solution of
\begin{multline}
\label{eq:ac_conserved_forcing_new}
  \partial_t u = \Delta u - \frac{1}{\eps^2} \Bigpars{W'(u) - \eps \sqrt{2 W(u)} g} \\
    + \frac{1}{\eps^2} \frac{\sqrt{2 W(u)}}{\int_{\R^d} \sqrt{2 W(u)} \dx}
    \int_{\R^d} \Bigpars{W'(u) - \eps \sqrt{2 W(u)} g} \dx.
\end{multline}
We show that if $u_\eps$ behaves like in expansion \eqref{eq:asymp_u_eps},
then for all $t$ in $[0,T]$,
\begin{align*}
  \abs{\Omega_0}
  &= \int_{\R^d} u_\eps(x,0) \dx + O(\eps^2) \\
  &= \int_{\R^d} u_\eps(x,t) \dx + O(\eps^2) \\
  &= \abs{\Omega_\eps(t)} + O(\eps^2),
\end{align*}
while the solutions of \eqref{eq:ac_forcing} may only conserve volume up
to order $\eps$.

In section \ref{sec:numeric}, we present numerical evidence for the
above claims, which show that the modified phase field model
\eqref{eq:ac_conserved_forcing_new} has indeed better volume
preservation properties.

\section{A modified reaction--diffusion equation for mean curvature flow
with a forcing term}
\label{sec:new_model}
%=======================================================================

Let $\partial \Omega(t)$ denote an evolving hypersurface of codimension
$1$ in $\R^d$, with velocity law $V_n = \kappa + g$. This motion can
be interpreted as the energy gradient of
\[
  J(\Omega) = \int_{\partial \Omega} 1 \ds - \int_{\Omega} g \dx.
\]
Let $W$ be a bounded double well potential. In this whole section, we
will for convenience use a potential with wells at $-1$ and $1$, for
example $W(s) = \min \set{\frac{1}{2} (1 - s^2)^2, M}$, where $M$ is a
given positive constant. Our strategy is to introduce a modified
Ginzburg--Landau energy $\tilde{J}_\eps$ defined on $L^1(\R^d)$ by
\[
  \tilde{J}_\eps(u) =
  \begin{dcases}
    \int_{\R^d} \biggpars{\frac{\eps}{2} \abs{\nabla u}^2
    + \frac{1}{\eps} W(u)} \dx
    - \int_{\R^d} G(u) g \dx & \text{if $u \in H^1(\R^d)$},
    \\[2ex]
    +\infty & \text{otherwise},
  \end{dcases}
\]
with
\[
  G(s) = \int_0^s \sqrt{2 W(t)} \dt.
\]
The function $G$ is Lipschizt continuous. For $g$ in $L^\infty(\R^d)$,
the term $u \mapsto \int_{\R^d} G(u) g \dx$ acts as a continuous
perturbation in the $L^1(\R^d)$ topology of the classical
Modica--Mortola energy. The stability of $\Gamma$-convergence
with respect to continuous perturbations allows us to extend
the Modica--Mortola result to the case at hand, and show that
$\tilde{J}_\eps$ $\Gamma$-converges to $c_W J$. The gradient flow of
$\tilde{J}_\eps$ should then provide a mean to approximate the motion
of $\partial \Omega(t)$ via the resolution of the reaction--diffusion
equation \eqref{eq:ac_forcing_new}.

\begin{remark}
In the simplest case where $g = 1$, equations \eqref{eq:ac_forcing}
and \eqref{eq:ac_forcing_new} can be expressed as Allen--Cahn
equations with particular double well potentials respectively equal
to $W_{1, \eps}(s) = W(s) + \eps c_W s$, and $W_{2, \eps}(s) = W(s)
+ \eps G(s)$.  These two potentials are related through the position
and height of their wells, which are asymptotically equal as $\eps
\to 0$. This explains why we expect that \eqref{eq:ac_forcing} and
\eqref{eq:ac_forcing_new} converge to the same motion.
\end{remark}

\subsection{Formal asymptotics for the modified Allen--Cahn
equation}
\label{sec:formal_analysis}
%-----------------------------------------------------------------------

We denote by $u_\eps$ the solution of equation
\eqref{eq:ac_forcing_new}:
\[
  \partial_t u = \Delta u - \frac{1}{\eps^2} W'(u)
  + \frac{1}{\eps} \sqrt{2 W(u)} g,
\]
with initial condition
\[
  u(x,0) = q\biggpars{\frac{d(x, \Omega_0)}{\eps}}.
\]
Our aim is to propose an asymptotic analysis of $u_\eps$ in the
simplest two-dimensional radial case. Using polar coordinates $(r,
\theta)$, we consider a forcing term $g$ which does not depend on
$\theta$: $g = g(r,t)$. The initial set $\Omega_0$ is taken as
a disk of radius $1$:
\[
  \Omega_0 = \Set{(r, \theta) \in [0, +\infty) \times [0,2\pi)}{r \le 1}.
\]
Let $\Omega(t)$ be the mean curvature flow evolving from $\Omega_0$
according to the law $V_n = \kappa + g$. It is well known that in
this case, $\Omega(t)$ remains a circle for all $t$ (recall that the
forcing term $g$ is supposed to be radial). We will denote by $R(t)$
the radius of $\Omega(t)$, solution of the following ODE:
\[
  R' + \frac{1}{R} = g(R,t),
\]
with initial condition $R(0) = 1$. In this simple case, the solution
$u_\eps$ is also radial and depends only on $r$. It satisfies
\[
  \partial_t u_\eps
  - \frac{1}{r} \partial_r (r \partial_r u_\eps)
  + \frac{1}{\eps^2} W'(u_\eps)
  - \frac{1}{\eps} \sqrt{2 W(u_\eps)} g = 0.
\]
As $u_\eps$ is radial, every of its level sets is circular,
and we denote by $R_\eps(t)$ the radius of $\set{u_\eps(r,t) =
\frac{1}{2}}$. We thus have $R_\eps(0) = 1$ and $u_\eps(R_\eps, t)
= \frac{1}{2}$. We introduce the classical stretched variable $y =
\frac{r - R_\eps}{\eps}$ (see \cite{Bellettini1995}), and we define
$U_\eps$ by
\[
  U_\eps(y,t) = u_\eps(R_\eps + \eps y, t).
\]
This new function $U_\eps$ satisfies
\begin{equation}
\label{eq:edp_U}
  \partial_t U_\eps
  - \frac{1}{\eps} R'_\eps \partial_y U_\eps
  - \frac{1}{\eps r} \partial_y U_\eps
  - \frac{1}{\eps^2} \partial_{yy} U_\eps
  + \frac{1}{\eps^2} W'(U_\eps)
  - \frac{1}{\eps} \sqrt{2 W(U_\eps)} g = 0.
\end{equation}
We now consider asymptotic developments of $U_\eps$ and $R_\eps$
as follow:
\[
  U_{\eps}(y,t) = \sum_{i=0}^{+\infty} \eps^i U_{i}(y,t), \quad
  R_{\eps}(t) = \sum_{i=0}^{+\infty} \eps^i R_{i}(t),
\]
with $U_0(0,t) = \frac{1}{2}$, $R_0(0) = R(0)$, and $U_i(0,t) = 0$,
$R_i(0) = 0$ for all $i \ge 1$. We have
\begin{gather*}
  \frac{1}{r}
    = \biggpars{\eps y + \sum_{i=0}^{+\infty} \eps^i R_i}^{-1}
    = \frac{1}{R_0} - \eps \frac{y + R_1}{{R_0}^2} + O(\eps^2),
  \\
  W'(U_\eps) = W'(U_0) + \eps W''(U_0) U_1
    + \eps^2 \Bigpars{W'''(U_0) U_1 + W''(U_0) U_2} + O(\eps^3),
  \\
  \sqrt{2 W(U_\eps)} = \sqrt{2W(U_0)}
    + \eps \frac{W'(U_0)}{\sqrt{2 W(U_0)}} U_1 + O(\eps^2),
  \\
  g(r, t)
    = g\biggpars{\eps y + \sum_{i=0}^{+\infty} \eps^i R_i, t}
    = g(R_0, t) + \eps \partial_r g(R_0, t) (y + R_1) + O(\eps^2).
\end{gather*}
Using these equalities, \eqref{eq:edp_U} rewrites
\begin{equation}
\label{eq:edp_U_power_of_eps}
\begin{aligned}
  0 &=
  \frac{1}{\eps^2} \Bigpars{\partial_{yy} U_0 - W'(U_0)} \\
  &\quad + \frac{1}{\eps} \biggpars{\partial_{yy} U_1 - W''(U_0) U_1
    + \partial_y U_0 \biggpars{R_0' + \frac{1}{R_0}}
    + \sqrt{2 W(U_0)} g(R_0, t) } \\
  &\quad - \partial_t U_0 + R'_1 \partial_y U_0 + R'_0 \partial_y U_1
    + \partial_{yy} U_2 + \frac{1}{R_0} \partial_y U_1
    - \frac{y + R_1}{{R_0}^2} \partial_y U_0 \\
  &\quad + g(R_0, t) \frac{W'(U_0)}{\sqrt{2 W(U_0)}} U_1
    + \partial_r g(R_0, t) (y + R_1) \sqrt{2 W(U_0)} \\
  &\quad - W'''(U_0) U_1 - W''(U_0) U_2 \\
  &\quad + O(\eps).
\end{aligned}
\end{equation}
Following powers of $\eps$, we will now identify each term to zero.

\smallParagraph{Terms in $\eps^{-2}$.}
The first term $U_0$ satisfies $\partial_{yy} U_0 = W'(U_0)$ with
initial condition $U_0(0, t) = \frac{1}{2}$ for all $t$ in $[0,T]$. It
can thus be identified to the profile $q$:
\[
  \forall y \in \R, \quad \forall t \in [0,T], \quad U_0(y, t) = q(y).
\]

\smallParagraph{Terms in $\eps^{-1}$.}
Knowing by definition of the profile that $q' = -\sqrt{2 W(q)}$,
it follows from $U_0(.,t) = q(.)$ that $\partial_y U_0 = - \sqrt{2
W(U_0)}$. Equation \eqref{eq:edp_U_power_of_eps} then gives
\[
  \partial_{yy} U_1 - W''(U_0) U_1
  = -\partial_y U_0 \biggpars{R_0' + \frac{1}{R_0} - g(R_0,t) }.
\]
Multiplying this equality by $\partial_y U_0$ and integrating over
$\R$, we get
\begin{align*}
  \biggpars{R_0' + \frac{1}{R_0} - g(R_0, t)}
    \int_\R (\partial_y U_0)^2 \dy
  &= -\int_\R \Bigpars{\partial_{yy} U_1 - W''(U_0) U_1}
    \partial_y U_0 \dy \\
  &= -\int_\R \partial_y \Bigpars{\partial_{yy} U_0 - W'(U_0)}
    U_1 \dy \\
  &= 0.
\end{align*}
As $\int_\R (\partial_y U_0)^2 \dy$ is strictly positive, we get the
following equation on $R_0$:
\[
  R_0' + \frac{1}{R_0} = g(R_0, t),
\]
with initial condition $R_0(0) = R(0)$. Hence $R_0$ can be identified
to $R$ since they both satisfy the same ODE with the same initial
datum. It follows from \eqref{eq:edp_U_power_of_eps} that $U_1$
is solution of
\[
  \partial_{yy} U_1 - W''(U_0) U_1 = 0.
\]
We then know (see section 3 in \cite{Bellettini1995}) that
there exists $\alpha(t) \in \R$ such that $U_1(., t) = \alpha(t)
q'(.)$. Indeed, the kernel of the operator $A: H^1(\R) \to H^{-1}(\R)$
defined by $A \zeta = \zeta'' - W''(q) \zeta$ can be identified to
$\linspan (q')$. Using $U_1(0,t) = 0$, we conclude that $\alpha(t)
= 0$ for all $t$, so that
\[
  \forall y \in \R, \quad \forall t \in [0,T], \quad U_1(y, t) = 0.
\]

\smallParagraph{Terms in $\eps^0$.}
Using $U_1 = 0$ and $\partial_t U_0 = 0$, we get from
\eqref{eq:edp_U_power_of_eps} that
\[
  \partial_{yy} U_2 - W''(U_0) U_2
  = \partial_y U_0 \biggpars{\frac{y + R_1}{{R_0}^2}
  - R'_1 + \partial_r g(R_0,t) (y + R_1)}.
\]
Multiplying by $\partial_y U_0$ and integrating over $\R$, we have
\begin{multline*}
  \biggpars{R'_1 - \frac{R_1}{{R_0}^2} - \partial_r g(R_0, t) R_1}
    \int_\R (\partial_y U_0)^2 \dy \\
  = -\int_\R \Bigpars{\partial_{yy} U_2 - W''(U_0) U_2 }
    \partial_y U_0 \dy
  + \biggpars{\frac{1}{{R_0}^2} + \partial_r g(R_0, t)}
    \int_\R y (\partial_y U_0)^2 \dy.
\end{multline*}
The first term in the right member vanishes as previously for
$U_1$. The second term also vanishes since $\partial_y U_0(., t)
= q'(.)$ is even with our choice of $W$. We deduce that $R_1$ is
solution of
\[
  R'_1 = \biggpars{\frac{1}{{R_0}^2} + \partial_r g(R_0, t)} R_1,
\]
with initial condition $R_1(0) = 0$. Hence $R_1(t) = 0$ for all $t$
in $[0,T]$. Finally, $U_2$ is obtained as the solution of
\[
  \partial_{yy} U_2 - W''(U_0) U_2
  = y \partial_y U_0 \biggpars{\frac{1}{{R_0}^2} + \partial_r g(R_0,t)}.
\]
Introducing the solution $\xi$ in $H^1(\R)$ of
\[
  \xi''(y) - W''\bigpars{q(y)} \xi(y) = y q'(y)
\]
with $\xi(0) = 0$, we can express $U_2$:
\[
  \forall y \in \R, \quad \forall t \in [0,T], \quad
  U_2(y,t) = \xi(y) \biggpars{\frac{1}{{R_0}^2} + \partial_r g(R_0,t)}.
\]

We finally conclude from this formal asymptotic analysis that $u_\eps$,
solution of \eqref{eq:ac_forcing_new}, is expected of the form
\begin{equation}
\label{eq:dev_u_complete}
  u_\eps(x,t)
  = q \biggpars{\frac{d\bigpars{x, \Omega_\eps(t)}}{\eps}}
  + \eps^2 \biggpars{\frac{1}{{R_0}^2} + \partial_r g(R_0,t)}
    \xi \biggpars{\frac{d\bigpars{x, \Omega_\eps(t)}}{\eps}}
  + O(\eps^3),
\end{equation}
where $\Omega_\eps(t)$ converge to $\Omega(t)$ in $O(\eps^2)$.

\subsection{Proof of convergence for the modified phase field model}
%-----------------------------------------------------------------------

In this section, we closely follow the work of \cite{Bellettini1995}
to prove the following theorem:

\begin{theorem}
\label{th:conv_forcing}
Let $\Omega(t)$ be a regular mean curvature flow with a forcing term
$g$ that satisfies
\begin{equation}
\label{eq:hyp_g}
  g(.,t) \in W^{3, \infty}(\R^d),
  \quad \partial_t g \in W^{1, \infty}\bigpars{\R^d \times (0,T)}.
\end{equation}
Given $\eps > 0$, let $u_\eps$ be solution of
\eqref{eq:ac_forcing_new}:
\[
  \partial_t u = \Delta u - \frac{1}{\eps^2} W'(u)
  + \frac{1}{\eps} \sqrt{2 W(u)} g,
\]
and let $\partial \Omega_\eps(t) = \Set{x \in \R^d}{u_\eps(x,t)
= \frac{1}{2}}$. Assume that the potential $W$ is given by $W(s) =
\frac{1}{2} (1 - s^2)^2$. Then there exist $\eps_0 > 0$ and a constant
$C$ depending only on $T$ such that for all $\eps$ in $(0,\eps_0]$,
the following estimate holds:
\begin{equation}
\label{eq:inclusion_th}
  \forall t \in [0,T], \quad
  \partial \Omega_\eps(t)
  \subseteq \Set{x \in \R^d}{\dist \bigpars{x, \partial \Omega(t)}
    \le C \eps^2 \abs{\log \eps}^2}.
\end{equation}
\end{theorem}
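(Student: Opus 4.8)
The plan is to prove the interface-localization estimate \eqref{eq:inclusion_th} by the barrier/comparison method of Bellettini--Paolini \cite{Bellettini1995}, exploiting the fact — established by the formal asymptotics of the previous subsection — that the modified equation \eqref{eq:ac_forcing_new} has NO $O(\eps)$ correction to the profile, only an $O(\eps^2)$ one. The strategy is to construct explicit sub- and super-solutions of the parabolic equation that sandwich $u_\eps$ and whose $\frac12$-level sets stay within distance $C\eps^2\abs{\log\eps}^2$ of $\partial\Omega(t)$; the comparison principle for \eqref{eq:ac_forcing_new} then transfers this localization to $u_\eps$ itself, yielding the stated inclusion.

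Concretely, I would work with the signed distance $d(x,t) = \dist\bigpars{x,\Omega(t)}$ to the exact flow, which is smooth in a tubular neighbourhood of $\partial\Omega(t)$ since the motion is regular on $[0,T]$, and there satisfies $\partial_t d = \Delta d + g$ together with $\abs{\nabla d} = 1$ (the eikonal identity) and $\Delta d = \kappa + O(d)$. Using these, I would define candidate barriers of the form
\[
  u^\pm_\eps(x,t)
  = q\biggpars{\frac{d(x,t) \pm \delta(t)}{\eps}}
    + \eps^2 \psi\biggpars{\frac{d(x,t)}{\eps}, t}
    \pm \rho(t),
\]
where $q$ is the standard profile, $\psi$ is built from the correctors $\xi$ and $\eta$ appearing in the asymptotics, and $\delta(t)$, $\rho(t)$ are small time-dependent shifts to be chosen. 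Plugging $u^\pm_\eps$ into the operator
\[
  \mathcal{L}u = \partial_t u - \Delta u + \frac{1}{\eps^2}W'(u)
    - \frac{1}{\eps}\sqrt{2W(u)}\,g,
\]
I would expand in powers of $\eps$ using the chain rule, the eikonal and distance identities, and the defining ODEs $q'' = W'(q)$, $q' = -\sqrt{2W(q)}$, and \eqref{eq:def_eta}. The $\eps^{-2}$ and $\eps^{-1}$ terms cancel by the profile and corrector equations exactly as in the formal analysis; because the modified nonlinearity kills the $O(\eps)$ profile correction, the leading residual is pushed to $O(\eps^0)$ with a favourable sign once the shifts $\delta,\rho$ absorb the $\Delta d - \kappa = O(d)$ discrepancy and the error from linearizing $\sqrt{2W(\cdot)}$.

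The choice of shifts is where the $\abs{\log\eps}$ factors enter: taking $\delta(t)$ of size $\eps^2\abs{\log\eps}^2$ with an exponential-in-time amplification (a Gronwall-type factor $e^{Ct}$ absorbing the linearized reaction), one arranges $\mathcal{L}u^+_\eps \ge 0 \ge \mathcal{L}u^-_\eps$ so that $u^\pm_\eps$ are genuine super-/sub-solutions, while the initial ordering $u^-_\eps(\cdot,0) \le u_\eps(\cdot,0) \le u^+_\eps(\cdot,0)$ holds by the choice of initial data $u_\eps(x,0) = q(d(x,\Omega_0)/\eps)$. The comparison principle then gives $u^-_\eps \le u_\eps \le u^+_\eps$ on $[0,T]$, and since the $\frac12$-level sets of $u^\pm_\eps$ are the zero sets of $d(x,t)\pm\delta(t) + O(\eps^2)$, i.e. lie within $\dist(x,\partial\Omega(t)) \le \delta(t) + C\eps^2 = O(\eps^2\abs{\log\eps}^2)$, the inclusion \eqref{eq:inclusion_th} follows for $\eps \le \eps_0$.

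The main obstacle I anticipate is the rigorous residual estimate: verifying that, after the profile and first corrector cancel the singular $\eps^{-2}$ and $\eps^{-1}$ terms, the remaining $O(1)$ residual has a definite sign uniformly in the tubular neighbourhood. This requires careful control of three things at once — the off-interface decay, where $\abs{d}/\eps \to \infty$ and one must use the exponential decay of $q'$, $\xi$, $\eta$ and the positivity of $W''$ at the wells to dominate error terms that are only polynomially bounded in $y = d/\eps$ (this is precisely where the $\abs{\log\eps}$ truncation of the barrier's validity region is needed); the linearization error of $\sqrt{2W(u^\pm_\eps)}$ around $\sqrt{2W(q)}$, which must be shown to be $O(\eps^2)$ and not merely $O(\eps)$; and the geometric error $\Delta d - \kappa = O(d)$, which contributes a term that the shift $\delta(t)$ must be tuned to absorb without destroying the sign. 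Assembling these into a single Gronwall inequality that closes with the claimed rate is the technical heart of the argument, and it is exactly here that following \cite{Bellettini1995} step by step — adapted to the modified reaction term — is essential.
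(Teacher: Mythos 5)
Your proposal follows essentially the same route as the paper: the Bellettini--Paolini barrier construction with a distance function shifted by $c_1(t)\eps^2\abs{\log\eps}^2$ (with an exponential-in-time amplification), an $\eps^2$ corrector built on $\xi$, profiles truncated at $s=O(\abs{\log\eps})$, verification of the sub/supersolution inequality separately near and away from the interface, and the comparison principle to transfer the localization to $u_\eps$ — and you correctly identify the key structural point that the modified nonlinearity eliminates the $O(\eps)$ profile correction. The only items you leave implicit that the paper treats explicitly are the proof of the comparison principle itself for the non-monotone, non-globally-Lipschitz reaction term $W'(u)-\eps\sqrt{2W(u)}\,g$ (the paper's Lemma~\ref{lem:comparison}, via a decomposition into a Lipschitz part on $[-1,1]$ plus a nondecreasing part, valid for $\norm{g}_{L^\infty}\le 2/\eps$), and the fact that the second-order corrector involves only $\xi$ weighted by $\bar h+\nabla d\cdot\nabla g$ (no $\eta$ appears for the modified equation); neither affects the soundness of your plan.
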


\paragraph{Notations and assumptions.}
Let $T>0$. For all $t$ in $[0,T]$, let $\Omega(t)$ be a mean curvature
flow with a forcing term $g$ that satisfies \eqref{eq:hyp_g}. In the
sequel, we will for convenience identify the signed distance to
$\Omega(t)$ to a function $d \colon \R^d \times [0,T] \to \R$ defined by
\[
  d(x,t) = d\bigpars{x, \Omega(t)} =
  \begin{cases}
    \dist \bigpars{x, \Omega(t)}
      & \text{if $x \in \R^d \setminus \Omega(t)$}, \\
    0
      & \text{if $x \in \partial \Omega(t)$}, \\
    -\dist \bigpars{x, \Omega(t)}
      & \text{if $x \in \Omega(t)$}.
  \end{cases}
\]
We assume that $\partial \Omega(t)$ is smooth enough so that $d$
satisfies
\begin{equation}
\label{eq:hyp_dist}
  d,\ \partial_t d,\ \partial_t \partial_{xx} d
  \in C^0(\bar{\Lambda}),
\end{equation}
where $\bar{\Lambda}$ is a tubular neighborhood of $\partial
\Omega(t)$. We assume that $\partial \Omega(t)$ is oriented by
the outward normal vector $n$ defined at a point $x$ of $\partial
\Omega(t)$ by $n(x,t) = \nabla d(x,t)$. We denote by $\kappa_1,
\dots, \kappa_{d-1}$ the principal curvatures of $\partial \Omega(t)$,
and we set
\[
  \kappa(x,t) = \sum_{i=1}^{d-1} \kappa_i(x,t),
  \quad
  h(x,t) = \sum_{i=1}^{d-1} \kappa^2_i(x,t).
\]
We choose $\kappa$ to be negative for convex balls. The evolution of
$\partial \Omega(t)$ is defined by $V_n(x,t) = \kappa(x,t) + g(x,t)$
for all $(x,t)$ in $\partial \Omega(t) \times [0,T]$, where $V_n$
denote the normal velocity.

Given $D > 0$, we define a tubular neighborhood $\Lambda(t)$ of
$\partial \Omega(t)$ by
\begin{equation}
\label{eq:def_Lambda}
  \Lambda(t) = \Set{x \in \R^d}{\abs{d(x,t)} \le D},
\end{equation}
and we set
\[
  \Lambda = \bigcup_{t \in [0,T]} \Lambda(t) \times \set{t}.
\]
If $D$ is sufficiently small, one can associate to any point $(x,t)$
of $\Lambda$ a unique projection $s(x,t)$ on $\partial \Omega(t)$
such that
\[
  \dist\bigpars{s(x,t),x} = \abs{d(x,t)}.
\]
For any scalar or vector function $f$ defined on $\partial \Omega(t)$,
we denote by $\bar{f}$ its extension on $\Lambda$, defined by
$\bar{f}(x,t) = f\bigpars{s(x,t),t}$. If $f$ is real-valued, then
we clearly have $\nabla d \cdot \nabla \bar{f} = 0$ on $\Lambda$. It
follows from \eqref{eq:hyp_dist} that
\begin{equation}
\label{eq:prop_h}
  \norm{\bar{h}}_{L^\infty(\Lambda)},
  \ \norm{\partial_t \bar{h}}_{L^\infty(\Lambda)},
  \ \norm{\nabla \bar{h}}_{L^\infty(\Lambda)},
  \ \norm{\Delta \bar{h}}_{L^\infty(\Lambda)} < +\infty.
\end{equation}
Moreover, geometric properties of the distance function $d$ imply
\begin{align*}
  \Delta d(x,t)
    &= \sum_{i=1}^{d-1}
      \frac{-\bar{\kappa}_i(x,t)}{1 - d(x,t) \bar{\kappa}_i(x,t)}
    = -\bar{\kappa}(x,t) - d(x,t) \bar{h}(x,t)
      + O\bigpars{d(x,t)^2}, \\
  \partial_t d(x,t)
    &= -\bar{V_n}(x,t)
    = -\bar{\kappa}(x,t) - \bar{g}(x,t).
\end{align*}
These estimates show that the motion of $\partial \Omega(t)$
can be described by an equation on $d$ inside the whole $\Lambda$
(see \cite{Ambrosio2000}):
\begin{equation}
\label{eq:distance}
  \forall (x,t) \in \Lambda, \quad
  \partial_t d(x,t) - \Delta d(x,t)
  = -\bar{g}(x,t) + d(x,t) \bar{h}(x,t) + O\bigpars{d(x,t)^2}.
\end{equation}

We denote by $q$ the profile function associated with the double well
potential $W$:
\[
  q = \argmin_{\zeta} \Set{\int_\R
  \biggpars{\frac{1}{2} {\zeta'}^2 + W(\zeta)} \ds}
  {\zeta \in H^1_{loc}(\R),
   \ \lim_{x \to \pm\infty} \zeta = \mp 1,
   \ \zeta(0) = 0}.
\]
The Euler equation for this problem writes $q'' = W'(q)$. More
precisely, as $W$ is smooth, $q$ is strictly decreasing and we have
$q' = -\sqrt{2 W(q)}$. If $W$ is defined by $W(s) = \frac{1}{2}
(1 - s^2)^2$, $q$ is given by $q(s) = -\tanh(s)$. In this case,
there exists a positive constant $C$ such that $\abs{q - 1} \le -C q'$.

Let $\xi$ in $H^2(\R)$ be solution of equation
\begin{equation}
\label{eq:def_xi}
  \xi''(s) - W''\bigpars{q(s)} \xi(s) = s q'(s)
\end{equation}
with initial condition $\xi(0) = 0$. Existence and uniqueness results
for this equation may be found in section 3 of \cite{Bellettini1995},
along with the following estimate:
\begin{equation}
\label{eq:estim_xi}
  \abs{\xi(s)},\ \abs{\xi'(s)} \le -C (1 + s^2) q'(s).
\end{equation}

\paragraph{Comparison lemma.}
Our proof of convergence relies on the following lemma:

\begin{lemma}
\label{lem:comparison}
Let $\eps > 0$, and let $u$ and $v$ in $L^2\bigpars{0,T; H^2(\R^d)}
\cap H^1\bigpars{0,T; L^2(\R^d)}$ be such that
\begin{equation}
\label{eq:hyp_lemma}
  \partial_t u - \Delta u + \frac{1}{\eps^2} W'(u)
    - \frac{1}{\eps} \sqrt{2 W(u)} g
  \ge \partial_t v - \Delta v + \frac{1}{\eps^2} W'(v)
    - \frac{1}{\eps} \sqrt{2 W(v)} g
\end{equation}
in $\R^d \times (0,T)$, and $u(x,0) \ge v(x,0)$ for $x$ in $\R^d$.
Then $u \ge v$ in $\R^d \times (0,T)$.
\end{lemma}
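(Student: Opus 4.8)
The plan is to establish the comparison by an energy estimate on the negative part of the difference, closed by a Gr\"onwall argument. Writing $z = u - v$ and subtracting the two sides of \eqref{eq:hyp_lemma}, one gets in $\R^d \times (0,T)$
\[
  \partial_t z - \Delta z
  + \frac{1}{\eps^2}\bigpars{W'(u) - W'(v)}
  - \frac{g}{\eps}\bigpars{\sqrt{2W(u)} - \sqrt{2W(v)}} \ge 0,
\]
together with $z(\cdot,0) \ge 0$. The goal is to show that the negative part $z^- = \max\set{-z,\,0}$ vanishes identically, which is exactly the assertion $u \ge v$.

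First I would multiply the inequality by the test function $z^- \ge 0$ and integrate over $\R^d$. Using the chain rule for Lipschitz truncations (Stampacchia), $\int_{\R^d} z^- \partial_t z \dx = -\tfrac12 \frac{d}{dt}\norm{z^-}_{L^2(\R^d)}^2$, while the diffusion term gives $-\int_{\R^d} z^- \Delta z \dx = \int_{\R^d}\nabla z^- \cdot \nabla z \dx = -\norm{\nabla z^-}_{L^2(\R^d)}^2 \le 0$, a favourable sign. This leaves the differential inequality
\[
  \frac12 \frac{d}{dt}\int_{\R^d}(z^-)^2\dx
  + \int_{\R^d}\abs{\nabla z^-}^2\dx
  \le \frac{1}{\eps^2}\int_{\R^d} z^-\bigpars{W'(u)-W'(v)}\dx
  - \frac{1}{\eps}\int_{\R^d} g\,z^-\bigpars{\sqrt{2W(u)}-\sqrt{2W(v)}}\dx,
\]
so that everything reduces to controlling the two reaction terms on the right by $\int (z^-)^2$.

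The key structural observation is that, although $W'$ is cubic and hence not globally Lipschitz, it is \emph{one-sided} Lipschitz: for $W(s) = \tfrac12(1-s^2)^2$ one has $W''(s) = 6s^2 - 2 \ge -2$, so $s \mapsto W'(s) + 2s$ is non-decreasing and therefore $z^-\bigpars{W'(u) - W'(v)} \le 2(z^-)^2$ pointwise (on $\set{u<v}$, and trivially elsewhere). This bounds the stiff cubic term \emph{without any a priori $L^\infty$ control} on $u,v$, which is the crux of the argument. For the remaining term I would use that $\sqrt{2W} = G'$ is Lipschitz, with constant $L$ say, and that $g \in L^\infty$, giving $\abs{g\,z^-\bigpars{\sqrt{2W(u)}-\sqrt{2W(v)}}} \le \norm{g}_{L^\infty} L\,(z^-)^2$. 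Dropping the nonnegative gradient term then leaves
\[
  \frac{d}{dt}\int_{\R^d}(z^-)^2\dx
  \le \Bigpars{\frac{4}{\eps^2} + \frac{2\norm{g}_{L^\infty}L}{\eps}} \int_{\R^d}(z^-)^2\dx.
\]
Since $z(\cdot,0) \ge 0$ forces $z^-(\cdot,0) = 0$, Gr\"onwall's lemma yields $\int_{\R^d}(z^-)^2\dx = 0$ for all $t \in [0,T]$, that is $u \ge v$.

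The main obstacle is not the algebra above but its rigorous justification in the low-regularity class $L^2(0,T;H^2)\cap H^1(0,T;L^2)$: one must check that $z^-$ is an admissible multiplier and that the truncation identities and the integration by parts hold in this setting, and, more delicately, that the nonlinear products such as $z^-\bigpars{W'(u)-W'(v)}$ are genuinely integrable over the unbounded domain $\R^d$ \emph{before} the one-sided estimate is invoked, since no uniform bound on $u,v$ has been assumed and $W'$ grows cubically. I expect to handle this by a standard truncation and approximation of the test function combined with the one-sided monotonicity, noting also that the global Lipschitz bound on $\sqrt{2W}$ — valid for the bounded potential used in this section, or otherwise obtainable a posteriori from uniform bounds on $u$ and $v$ — is precisely what renders the forcing term harmless.
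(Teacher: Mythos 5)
Your overall strategy coincides with the paper's: both proofs test the differential inequality against the negative part of $u-v$ (the paper writes $e=\max(v-u,0)$, identical to your $z^-$), drop the good gradient term, absorb the reaction terms into $\int (z^-)^2$, and conclude by Gr\"onwall from $z^-(\cdot,0)=0$. Where you differ is in how the non-Lipschitz nonlinearity is tamed. The paper groups the whole reaction term into $W'_{g,\eps}(s)=W'(s)-\eps\sqrt{2W(s)}\,g$ and splits it as a globally Lipschitz piece (its restriction to $[-1,1]$) plus a nondecreasing piece (its restriction to $\abs{s}>1$), the monotone piece contributing a sign that can be discarded; this requires $\norm{g}_{L^\infty}\le 2/\eps$. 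Your one-sided Lipschitz observation $W''\ge -2$ is exactly the same idea in different clothing (one-sided Lipschitz $=$ nondecreasing $+$ linear), and for the cubic term alone it is arguably cleaner, since it needs no case distinction and no a priori bound on $u,v$.

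The one genuine soft spot is your treatment of the forcing term. For the potential $W(s)=\tfrac12(1-s^2)^2$ assumed in Theorem \ref{th:conv_forcing} (which is the setting in which the lemma is invoked), $\sqrt{2W(s)}=\abs{1-s^2}$ is \emph{not} globally Lipschitz, so the bound $\abs{g\,z^-(\sqrt{2W(u)}-\sqrt{2W(v)})}\le \norm{g}_{L^\infty}L\,(z^-)^2$ does not hold as stated; and the two escapes you propose are not really available, since the lemma is formulated for arbitrary $u,v$ in the energy class (no $L^\infty$ bound is assumed, and deriving one for $u_\eps$ would itself use a comparison argument). The repair is immediate, however, and stays within your own framework: apply the one-sided Lipschitz argument to the \emph{combined} nonlinearity $W'_{g,\eps}$. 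Its generalized derivative is $6s^2-2\pm 2\eps g s\ \ge\ -2-\tfrac{1}{6}\eps^2\norm{g}_{L^\infty}^2$, which is bounded below uniformly in $s$, so $z^-\bigl(W'_{g,\eps}(u)-W'_{g,\eps}(v)\bigr)\le C(z^-)^2$ with no restriction on $g$ — in fact slightly better than the paper's $\norm{g}_{L^\infty}\le 2/\eps$ hypothesis. With that modification (and the routine justification of $z^-$ as a test function, which both you and the paper treat lightly), your proof is complete.
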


\begin{proof}
Let $e = \max(v - u, 0)$. Multiplying \eqref{eq:hyp_lemma} by $e$
and integrating over $\R^d$, we get
\begin{align*}
  \frac{d}{dt} {\norm{e(.,t)}}^2_{L^2(\R^d)}
  & \le \frac{2}{\eps^2} \Scal{W'(u) - W'(v)}{e}_{L^2(\R^d)} \\
  &\qquad  - \frac{2}{\eps}
    \Scal{\Bigpars{\sqrt{2 W(u)} - \sqrt{2 W(v)}} g}{e}_{L^2(\R^d)} \\
  & \le \frac{2}{\eps^2}
    \Scal{W'_{g, \eps}(u) - W'_{g, \eps}(v)}{e}_{L^2(\R^d)},
\end{align*}
where $W_{g, \eps}$ is defined by
\[
  W'_{g, \eps}(s)
  = W'(s) - \eps \sqrt{2 W(s)} g, \quad W(0) = 0.
\]
The idea is then to decompose $W'_{g, \eps}$ under the form
\[
  W'_{g, \eps} = W'_{g, \eps, L} + W'_{g, \eps, I},
\]
where $W'_{g, \eps, L}$ is Lipschitz continuous on $\R$, and $W'_{g,
\eps, I}$ is nondecreasing. More precisely, when $W(s) = \frac{1}{2}
(1 - s^2)^2$, we can use
\[
  W'_{g, \eps, L}(s) = W'_{g, \eps}(s) \chi_{[-1,1]}(s)
  \quad \text{and} \quad
  W'_{g, \eps, I}(s) = W'_{g, \eps}(s) \bigpars{1 - \chi_{[-1,1]}(s)},
\]
which satisfy the previous assumption if $\norm{g}_{L^\infty} \le
\frac{2}{\eps}$. Then, noticing that $e(x, 0) = 0$ by assumption,
we obtain
\begin{align*}
  \norm{e(.,t)}^2_{L^2(\R^d)}
    & \le \frac{2}{\eps^2} \int_0^t
      \biggabs{\Scal{W'_{g, \eps}(u) - W'_{g, \eps}(v)}
      {e(.,\tau)}_{L^2(\R^d)}} \dtau \\
    & \le \frac{2}{\eps^2} \sup_{x \in \R^d}
      \bigset{\lip(W'_{g, \eps, L})}
      \int_0^t \norm{e(.,\tau)}^2_{L^2(\R^d)} \dtau.
\end{align*}
Note that the $\sup$ is bounded just as
$\norm{g}_{L^\infty}$. Gronwall's lemma implies that for almost
every $t$ in $(0,T)$, $\norm{e(.,t)}_{L^2(\R^d)} = 0$, and $e = 0$
almost everywhere in $\R^d \times (0,T)$.
\end{proof}

\paragraph{Construction of a subsolution.}
Using our previous asymptotic expansion of $u_\eps$, we now build a
subsolution to problem \eqref{eq:ac_forcing_new}. Let $\delta \ge
3$ be a fixed integer. For all $\eps > 0$, we set $s_\eps = \delta
\abs{\log \eps}$. Since $q(s) = - \tanh(s)$, we have
\[
  q(s_\eps)
    = -1 + \frac{2 \eps^{2 \delta}}{1 + \eps^{2 \delta}}
    = -1 + O(\eps^{2 \delta}), \quad
  q'(s_\eps)
    = -\bigpars{1 - q(s_\eps)^2}
    = O(\eps^{2 \delta}),
\]
and it follows from \eqref{eq:estim_xi} that
\[
  \abs{\xi(s_\eps)} = O(\eps^{2 \delta} \abs{\log \eps}^2), \quad
  \abs{\xi'(s_\eps)} = O(\eps^{2 \delta} \abs{\log \eps}^2).
\]
We define two auxiliary functions $q_\eps$ and $\xi_\eps$ by
\[
  q_\eps(s) =
  \begin{cases}
    q(s) & \text{if } 0 \le s \le s_\eps, \\
    P_q(s) & \text{if } s_\eps \le s \le 2 s_\eps, \\
    -1 & \text{if } s > 2 s_\eps, \\
    - q_\eps(-s) & \text{if } s < 0,
  \end{cases}
\]
and
\[
  \xi_\eps(s) =
  \begin{cases}
    \xi(s) & \text{if } 0 \le s \le s_\eps, \\
    P_\xi(s) & \text{if } s_\eps \le s \le 2 s_\eps, \\
    0 & \text{if } s > 2 s_\eps, \\
    - \xi_\eps(-s) & \text{if } s < 0,
  \end{cases}
\]
where $P_q$ and $P_\xi$ are polynomials of degree $3$ defined in such
a way that $q_\eps$ and $\xi_\eps$ are in $C^1(\R)$. It follows that
\begin{align*}
  \norm{P_q + 1}_{L^\infty(I_\eps)}
    + s_\eps \norm{P'_q}_{L^\infty(I_\eps)}
    + s_\eps^2 \norm{P''_q}_{L^\infty(I_\eps)}
    &\le C \Bigpars{\abs{q(s_\eps) + 1} + s_\eps \abs{q'(s_\eps)}}, \\
  \norm{P_\xi}_{L^\infty(I_\eps)}
    + s_\eps \norm{P'_\xi}_{L^\infty(I_\eps)}
    + s_\eps^2 \norm{P''_\xi}_{L^\infty(I_\eps)}
    &\le C \Bigpars{\abs{q(s_\eps)} + s_\eps \abs{q'(s_\eps)}},
\end{align*}
with $I_\eps = [s_\eps, 2 s\eps]$.
Then we easily check that
\[
  \norm{q_\eps - q}_{L^\infty(\R)} = o(\eps^{2\delta - 1}), \quad
  \norm{\xi_\eps - \xi}_{L^\infty(\R)} = o(\eps^{2\delta - 1}),
\]
together with
\begin{equation}
\label{eq:edo_q_eps}
  q''_\eps - W'(q_\eps) = o(\eps^{2 \delta - 1}), \quad
  q'_\eps + \sqrt{2 W(q_\eps)} = o(\eps^{2 \delta - 1}),
\end{equation}
and
\[
  \xi''_\eps - W''(q_\eps) \xi_\eps - s q'_\eps
  = o(\eps^{2 \delta - 1}).
\]

For $\eps > 0$, we introduce the modified distance function $d_\eps^-$
defined by:
\[
  \forall (x,t) \in \R^d \times [0,T], \quad
  d_\eps^-(x,t) = d(x,t) + c_1(t) \eps^2 \abs{\log \eps}^2,
\]
where $c_1$ is a positive continuous function, independent of $\eps$,
that will be determined later. For $t$ in $[0,T]$, we introduce the sets
\[
  \Lambda_\eps^-(t)
  = \Set{x \in \R^d}{\abs{d^-_\eps(x,t)}
  < 2 \delta \eps \abs{\log \eps}}
\]
and
\[
  \Lambda^-_\eps
  = \bigcup_{t \in [0,T]} \Lambda^-_\eps(t) \times \set{t}.
\]
It is then possible to find $\eps_0 > 0$ depending only on $\delta$,
$c_1$, $D$, such that
\begin{equation}
\label{eq:incl_Lambda}
  \forall \eps \le \eps_0,
  \quad \forall t \in [0,T],
  \quad \Lambda_\eps^-(t) \subset \Lambda(t),
\end{equation}
where $\Lambda(t)$ is the tubular neighborhood defined in
\eqref{eq:def_Lambda}. In particular, we see that
\[
  \forall (x,t) \in \Lambda_\eps^-,
  \quad d(x,t) = O(\eps \abs{\log \eps}).
\]
Noticing that $\nabla d_\eps^- = \nabla d$ and $\nabla d_\eps^- \cdot
\nabla \bar{h} = 0$ in $\Lambda_\eps^-$, it follows from
\eqref{eq:distance} that
\begin{align}
\label{eq:modified_distance}
  \partial_t d_\eps^- - \Delta d_\eps^-
    &= \partial_t d - \Delta d + c_1' \eps^2 \abs{\log \eps}^2 \notag \\
    &= -\bar{g} + d_\eps^- \bar{h} + (c_1' -c_1 \bar{h}) \eps^2 \abs{\log \eps}^2
       + O(\eps^2 \abs{\log \eps}^2).
\end{align}
Setting $y = \frac{d_\eps^-}{\eps}$, we define $v_\eps^-$ on $\R^d
\times [0,T]$ by
\[
  v^-_\eps =
  \begin{cases}
    q_\eps(y) + \eps^2 (\bar{h} + \nabla d \cdot \nabla g) \xi_\eps(y)
      - c_2 \eps^3 \abs{\log \eps}^2
      & \text{in } \Lambda_\eps^-, \\
    -1 - c_2 \eps^3 \abs{\log \eps}^2
      & \text{in } \set{d_\eps^- \ge 2 \delta \eps \abs{\log \eps}}, \\
    +1 - c_2 \eps^3 \abs{\log \eps}^2
      & \text{in } \set{d_\eps^- \le -2 \delta \eps \abs{\log \eps}},
  \end{cases}
\]
where $c_2$ is a constant independent of $\eps$ that we will be
determined later. In view of \eqref{eq:hyp_dist}, we easily check that
$v_\eps^-$ belongs to $L^2 \bigpars{0,T; H_{loc}^1(\R^d)} \cap H^1
\bigpars{0,T; L_{loc}^2(\R^d)}$. Our goal is to show that $v_\eps^-$ is
a subsolution of \eqref{eq:ac_forcing_new}.

Let $u_\eps$ be solution of \eqref{eq:ac_forcing_new}. We will first
prove that
\begin{equation}
\label{eq:comparison_uv}
  \forall x \in \R^d, \quad v_\eps^-(x,0) \le u_\eps(x,0).
\end{equation}
To this end, we introduce $w_\eps$ defined by
\[
  w_\eps = q(y)
    + \eps^2 (\bar{h} + \nabla d \cdot \nabla g) \xi(y)
    - \frac{c_2}{2} \eps^3 \abs{\log \eps}^2,
\]
and we note that when $\eps$ is sufficiently small,
\begin{align*}
  v_\eps(x, 0)
  &\le w_\eps(x, 0) - \frac{c_2}{2} \eps^3 \abs{\log \eps}^2
    + o(\eps^{2 \delta - 1}) \\
  &\le w_\eps(x, 0),
\end{align*}
so that \eqref{eq:comparison_uv} follows from showing that
\begin{multline*}
  w_{\eps}(x,0) - u_{\eps}(x,0)
  = q\bigpars{y(x,0)} - q\biggpars{\frac{d(x,0)}{\eps}} \\
  + \eps^2 \Bigpars{\bar{h}(x,0)
    + \nabla d(x,0) \cdot \nabla g(x,0)} \xi\bigpars{y(x,0)}
    - \frac{c_2}{2} \eps^3 \abs{\log \eps}^2
\end{multline*}
is non-positive. We define for convenience
\begin{align*}
  I_1 &= q\bigpars{y(x,0)} - q\biggpars{\frac{d(x,0)}{\eps}}, \\
  I_2 &= \eps^2 \Bigpars{\bar{h}(x,0)
    + \nabla d(x,0) \cdot \nabla g(x,0)} \xi\bigpars{y(x,0)}.
\end{align*}
The following lemma is proved in section 6 of \cite{Bellettini1995}
(recall that $q'$ is negative).

\begin{lemma}
Let $z = \frac{d(x,0)}{\eps}$, and let $y = \frac{d_\eps^-(x,0)}{\eps} =
z + c_1(0) \eps \abs{\log \eps}^2$. Then for $\eps$ sufficiently small,
\[
  2 q'(y) \le q'(s) \le \frac{1}{2} q'(y)
\]
for all $s$ in $[z,y]$.
\end{lemma}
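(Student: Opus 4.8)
The plan is to reduce the two-sided inequality to a multiplicative estimate on $\abs{q'}$ and then control the latter through the \emph{logarithmic} derivative of $q'$. Since $q(s) = -\tanh(s)$ is strictly decreasing, $q'(s) = -(1 - q(s)^2) < 0$ for every $s$; dividing the asserted inequalities by the negative number $q'(y)$ reverses them, so the claim is equivalent to
\[
  \frac{1}{2} \le \frac{\abs{q'(s)}}{\abs{q'(y)}} \le 2
  \quad \text{for all } s \in [z,y].
\]
Thus it suffices to show that, over the short interval $[z,y]$, the magnitude of $q'$ cannot vary by more than a factor of two once $\eps$ is small.

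Next I would estimate the logarithmic derivative of $q'$. A naive pointwise bound on $\abs{q'}$ is useless here, since both $\abs{q'(z)}$ and $\abs{q'(y)}$ may be exponentially small, so the right object to control is $(\log\abs{q'})'$. Using the profile identities $q'' = W'(q)$ and $q' = -\sqrt{2 W(q)}$, with $W(s) = \frac{1}{2}(1 - s^2)^2$ so that $W'(q) = -2q(1 - q^2)$ and $\sqrt{2 W(q)} = 1 - q^2$, one computes
\[
  \frac{d}{ds}\log\abs{q'(s)} = \frac{q''(s)}{q'(s)} = \frac{-2q(1 - q^2)}{-(1 - q^2)} = 2q(s)
\]
(equivalently, directly from $q = -\tanh$, $\log\abs{q'} = 2\log\operatorname{sech} s$ has derivative $-2\tanh s$). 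Since $\abs{q(s)} \le 1$, this gives the uniform bound $\abs{(\log\abs{q'})'} \le 2$ on all of $\R$.

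Integrating this bound between $s$ and $y$ yields, for every $s$ in $[z,y]$,
\[
  \abs{\log\abs{q'(s)} - \log\abs{q'(y)}}
  \le 2\abs{s - y}
  \le 2\abs{z - y}
  = 2 c_1(0)\, \eps \abs{\log \eps}^2,
\]
where the last equality uses $y - z = c_1(0)\eps\abs{\log\eps}^2$ and the middle one uses $s \in [z,y]$. Since $\eps\abs{\log\eps}^2 \to 0$ as $\eps \to 0$, there is an $\eps_0 > 0$ (depending on $c_1(0)$) such that the right-hand side is at most $\log 2$ for all $\eps \le \eps_0$. Exponentiating gives $\tfrac{1}{2} \le \abs{q'(s)}/\abs{q'(y)} \le 2$, which is the reformulated claim, and hence the lemma.

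The only point requiring care — and what I would flag as the conceptual crux rather than a genuine difficulty — is the choice of multiplicative (logarithmic) control: a purely additive Lipschitz estimate on $q'$ would be worthless because $q'$ is exponentially small near the endpoints, whereas the logarithmic derivative $2q$ is globally bounded, converting the vanishingly short interval length $c_1(0)\eps\abs{\log\eps}^2$ into an arbitrarily small multiplicative distortion.
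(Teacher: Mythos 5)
Your proof is correct: the computation $(\log\abs{q'})' = q''/q' = 2q$, bounded by $2$ in absolute value, combined with the interval length $y-z = c_1(0)\eps\abs{\log\eps}^2 \to 0$, gives exactly the two-sided multiplicative bound, and the sign bookkeeping when dividing by $q'(y)<0$ is handled properly. The paper itself offers no proof (it defers to section 6 of [Bellettini--Paolini 1995]), but your argument is the standard mechanism behind that reference --- namely that $\abs{q''} = \abs{W'(q)} \le C\sqrt{2W(q)} = C\abs{q'}$, i.e.\ a bounded logarithmic derivative --- so this is a correct, self-contained rendering of essentially the intended approach.
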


This lemma implies that
\begin{align*}
  I_1
  & = q\biggpars{\frac{d_\eps^-(x,0)}{\eps}}
    - q\biggpars{\frac{d_\eps^-(x,0) - c_1(0) \eps^2
    \abs{\log \eps}^2}{\eps}} \\
  & \le \frac{1}{2} q'\biggpars{\frac{d_\eps^-(x,0)}{\eps}}
    c_1(0) \eps \abs{\log \eps}^2,
\end{align*}
and using \eqref{eq:def_xi}, it follows that
\[
  I_2 \le -K c \eps^2 \bigpars{1 + y(x,0)^2}
    q'\bigpars{y(x,0)},
\]
where $K = \norm{h(.,0)}_{L^{\infty}(\Lambda(0))} + \norm{\nabla
g(.,0)}_{L^{\infty}(\R^d)}$. We then distinguish two cases. If $\abs{y}
> \abs{\log \eps}$, then $(1 + y^2) \abs{q'(y)} < O(\eps^2 \abs{\log
\eps}^2)$ and $I_2$ is controlled by the negative term $-\frac{c_2}{2}
\eps^3 \abs{\log \eps}^2$, so that
\begin{align*}
  w_\eps(x,0) - u_\eps(x,0)
  & = I_1 + I_2 - \frac{c_2}{2} \eps^3 \abs{\log \eps}^2 \\
  & \le I_2 - \frac{c_2}{2} \eps^3 \abs{\log \eps}^2 \\
  & \le O(\eps^4 \abs{\log \eps}^2)
    - \frac{c_2}{2} \eps^3 \abs{\log \eps}^2.
\end{align*}
If $\abs{y} < \abs{\log \eps}$, then $I_2$ is controlled by $I_1$, and
\begin{align*}
  w_\eps(x,0) - u_\eps(x,0)
  & = I_1 + I_2 - \frac{c_2}{2} \eps^3 \abs{\log \eps}^2 \\
  & \le q'\biggpars{\frac{d_\eps^-(x,0)}{\eps}}
    \biggpars{\frac{1}{2} c_1(0) \eps \abs{\log \eps}^2
    - O(\eps^2 \abs{\log \eps}^2)}.
\end{align*}
Thus, choosing $c_1(0)$ and $c_2$ sufficiently large, we get the desired
estimate \eqref{eq:comparison_uv}.

Let us now check that
\begin{equation}
\label{eq:v_subsolution}
  \partial_t v_\eps^- - \Delta v_\eps^-
  + \frac{1}{\eps^2} W'(v_\eps^-)
  - \frac{1}{\eps} \sqrt{2 W (v_\eps^-)} g \le 0
\end{equation}
in $\R^d \times (0,T)$.

\smallParagraph{Case 1: $(x,t) \in \Lambda_\eps^-$.}
In this case, \eqref{eq:prop_h} implies that
\begin{align*}
  \partial_t v_\eps^-
    & = \frac{1}{\eps} q_\eps'(y) \partial_t d_\eps^-
    + \eps^2 \Bigpars{\partial_t (\bar{h} + \nabla d \cdot \nabla g)}
    \xi_\eps(y) + \eps (\bar{h} + \nabla d \cdot \nabla g)
    \xi'_\eps(y) \partial_t d_\eps^-,
  \\
  \nabla v_\eps^-
    & = \frac{1}{\eps} q_\eps'(y) \nabla d_\eps^-
    + \eps^2 \Bigpars{\nabla (\bar{h} + \nabla d \cdot \nabla g)}
    \xi_\eps(y) + \eps (\bar{h} + \nabla d \cdot \nabla g)
    \xi'_\eps(y) \nabla d_\eps^-,
  \\
  \Delta v_\eps^-
    & = \frac{1}{\eps^2} q_\eps''(y)
    + \frac{1}{\eps} q_\eps'(y) \Delta d_\eps^-
    + (\bar{h} + \nabla d \cdot \nabla g) \xi''_\eps(y) + O(\eps).
\end{align*}
Using these equalities with \eqref{eq:modified_distance}, we get
\begin{align*}
  \partial_t v_\eps^- - \Delta v_\eps^-
  &= -\frac{1}{\eps^2} q_\eps''(y)
    + \frac{1}{\eps} q_\eps'(y) (\partial_t d_\eps^- - \Delta d_\eps^-)
    - (\bar{h} + \nabla d \cdot \nabla g) \xi''_\eps(y) + O(\eps)
  \\
  &= -\frac{1}{\eps^2} q_\eps''(y) + \frac{1}{\eps} q_\eps'(y)
    \Bigpars{-\bar{g} + \eps y \bar{h}
    - \eps^2 \abs{\log \eps}^2 (c_1 \bar{h} - c_1')} \\
    & \quad \qquad -(\bar{h} + \nabla d \cdot \nabla g) \xi''_\eps(y)
    + O(\eps \abs{\log \eps}^2) \\
  &= \frac{1}{\eps^2} q_\eps''(y)
    - \frac{1}{\eps} q_\eps'(y) \bar{g}
    + \Bigpars{q_\eps'(y) y \bar{h}
      - (\bar{h} + \nabla d \cdot \nabla g) \xi''_\eps(y)} \\
    & \quad \qquad - \eps \abs{\log \eps}^2
    \Bigpars{q_\eps'(y) (c_1 \bar{h} - c_1')}
    + O(\eps \abs{\log \eps}^2).
\end{align*}
Since $(x,t)$ is in $\Lambda_\eps^-$, we obtain the following estimates
on the terms of order $0$ in \eqref{eq:v_subsolution}:
\begin{gather*}
\begin{aligned}
  g(x,t)
  & = g\bigpars{s(x,t) + d \nabla d, t} \notag \\
  & = \bar{g} + d \nabla d \cdot \bar{\nabla g } + O(d^2) \notag \\
  & = \bar{g} + \eps y \nabla d^-_\eps \cdot \bar{\nabla g}
    - c_1 \eps^2 \abs{\log \eps}^2 \nabla d^-_\eps \cdot \bar{\nabla g}
    + O(\eps^2 \abs{\log \eps}^2),
\end{aligned}
\\
  \frac{1}{\eps^2} W'(v_\eps^-)
  = \frac{1}{\eps^2} W'(q_\eps) + W''(q_\eps)
    (\bar{h} + \nabla d \cdot \nabla g) \xi_\eps
    - W''(q_\eps) c_2 \eps \abs{\log \eps}^2 + O(\eps),
\\
\begin{aligned}
  \frac{1}{\eps} \sqrt{2 W (v_\eps)} g
  & = \frac{1}{\eps} \sqrt{2 W (q_\eps)} g + O(\eps) \\
  & = \frac{1}{\eps} \sqrt{2 W (q_\eps)}
    \Bigpars{\bar{g} + \eps y \nabla d_\eps \cdot \bar{\nabla g}
    - c_1 \eps^2 \abs{\log \eps}^2 \nabla d_\eps^- \cdot \bar{\nabla g}} \\
    &\qquad + O(\eps \abs{\log \eps}^2) \\
  & = -\frac{1}{\eps} q'_\eps \Bigpars{\bar{g}
    + \eps y \nabla d_\eps^- \cdot \bar{\nabla g}
    - c_1 \eps^2 \abs{\log \eps}^2 \nabla d_\eps^- \cdot \bar{\nabla g}}
    + O(\eps \abs{\log \eps}^2) \\
  & = -\frac{1}{\eps} q'_\eps \Bigpars{\bar{g}
    + \eps y \nabla d_\eps^- \cdot \nabla g
    - c_1 \eps^2 \abs{\log \eps}^2 \nabla d \cdot \bar{\nabla g}}
    + O(\eps \abs{\log \eps}^2).
\end{aligned}
\end{gather*}
Here, we used \eqref{eq:edo_q_eps} and the fact that for $(x,t)$ in
$\Lambda_\eps^-$,
\[
  y q'_\eps \nabla d \cdot (\nabla g - \bar{\nabla g})
  = O(\eps \abs{\log \eps}).
\]
Summing the previous equalities, we obtain
\[
  \partial_t v_\eps - \Delta v_\eps + \frac{1}{\eps^2} W'(v_\eps)
  - \frac{1}{\eps} \sqrt{2 W (v_\eps)} g
  = I_1 + I_2 + I_3 + I_4 + O(\eps \abs{\log \eps}^2),
\]
with
\begin{align*}
  I_1 &= -\frac{1}{\eps^2} \Bigpars{ q_\eps''(y) - W'\bigpars{q_\eps(y)} }
    = o(\eps^{2 \delta - 3}), \\
  I_2 &= \frac{1}{\eps} q'_\eps ( \bar{g} - \bar{g} )
    = 0, \\
  I_3 &= -(\bar{h} + \nabla d \cdot \nabla g)
    \Bigpars{ \xi''_\eps(y) - W''(q_\eps )\xi_\eps - y q'_\eps }
    = o(\eps^{2 \delta - 3}), \\
  I_4 &= \eps \abs{\log \eps}^2
    \Bigpars{ -q_{\eps}'(c_1\bar{h} - c_1' + c_1 \nabla d \cdot \bar{\nabla g} )
    - c_2 W''(q_{\eps}) }.
\end{align*}
We now determine the function $c_1$ and the constant $c_2$ so that $I_4$
is sufficiently negative to compensate the term of order $\eps \abs{\log
\eps}^2$. Letting $K = \norm{h}_{L^\infty(\partial \Omega)} +
\norm{\nabla g}_{L^\infty(\R^d \times (0,T))}$, we set
\[
  c_1(t) = c \exp\bigpars{(1+K)t},
\]
so that
\[
  -\eps \abs{\log \eps}^2 q_\eps'\Bigpars{c_1 \bar{h} - c_1'
    + \nabla d \cdot \bar{\nabla g} c_1}
  \le c_1 \eps \abs{\log \eps}^2 q_\eps'.
\]
We thus have
\[
  I_4 \le -\eps \abs{\log \eps}^2
    c_2 \biggpars{-\frac{c_1}{c_2} q_\eps' + W''(q_\eps)}.
\]
Noticing that $-c_3 q_\eps' + W''(q_\eps)$ is uniformly positive for
$c_3$ large enough, we can choose $c$ and $c_2$ such that
\[
  \partial_t v_\eps^- - \Delta v_\eps^-
  + \frac{1}{\eps} \Bigpars{W'(v_\eps^-) - g \sqrt{2 W(v_\eps^-)}}
  \le 0
\]
in $\Lambda_\eps^-$.

\smallParagraph{Case 2: $(x,t) \notin \Lambda_\eps^-$.}
Here, the function $v_\eps^-$ is given by $v_\eps = \pm 1 - c_2 \eps^3
\abs{\log \eps}^2$, which implies $\partial_t v_\eps^- = 0$, $\Delta v_\eps^- = 0$,
\begin{gather*}
\begin{aligned}
  \frac{1}{\eps^2} W'(v_\eps^-)
    &= \frac{1}{\eps^2} \bigpars{W'(\pm 1)
    - W''(\pm 1) c_2 \eps^3 \abs{\log \eps}^2 + O(\eps^3)} \\
    &= -W''(\pm 1) c_2 \eps \abs{\log \eps}^2 + O(\eps),
\end{aligned}
\\
\begin{aligned}
  \frac{1}{\eps} \sqrt{2 W(v_\eps^-)} g
    &= \frac{1}{\eps} \bigpars{\sqrt{2 W(\pm 1)} g
    - (\sqrt{2 W})'(\pm 1) g c_2 \eps^3 \abs{\log \eps}^2 + O(\eps^3)} \\
    &= O(\eps^2 \abs{\log \eps}^2).
\end{aligned}
\end{gather*}
Noticing that $W''(\pm 1) > 0$ and choosing $c_2$ large enough
guarantees that in $\R^d \setminus \Lambda_\eps^-$,
\[
  \partial_t v_\eps^- - \Delta v_\eps^-
  + \frac{1}{\eps} \Bigpars{W'(v_\eps^-)
  - g \sqrt{2 W(v_\eps^-)}} \le 0.
\]

To conclude, we apply the comparison principle of lemma
\ref{lem:comparison} and discover
\[
  \forall (x,t) \in \R^d \times [0,T], \quad
  v_\eps^-(x,t) \le u_\eps(x,t).
\]

A similar argument can be applied to
\[
  v_\eps^+ =
  \begin{cases}
    q_\eps(y) + \eps^2 (\bar{h} - \nabla d \cdot \nabla g) \xi_\eps(y)
      + c_2 \eps^3 \abs{\log \eps}^2
      & \text{in } \Lambda_\eps^+, \\
    -1 + c_2 \eps^3 \abs{\log \eps}^2
      & \text{in } \set{d_\eps^+ \ge 2 \delta \eps \abs{\log \eps}}, \\
    +1 + c_2 \eps^3 \abs{\log \eps}^2
      & \text{in } \set{d_\eps^+ \le -2 \delta \eps \abs{\log \eps}},
  \end{cases}
\]
with $y = \frac{d_\eps^+}{\eps}$ and
\[
  d_\eps^+(x,t) = d(x,t) - c_1(t) \eps^2 \abs{\log \eps}^2,
\]
to show that $v_\eps^+$ is a supersolution to
\eqref{eq:ac_forcing_new}:
\[
  \forall (x,t) \in \R^d \times [0,T], \quad
  v_\eps^+(x,t) \ge u_\eps(x,t).
\]

\paragraph{Proof of the theorem.}

\begin{proof}[Proof of theorem \ref{th:conv_forcing}]
We choose $\eps_0$ so that \eqref{eq:incl_Lambda} holds. Let $t$ in
$[0,T]$ and $x$ in $\partial \Omega_\eps(t)$ be given. We first show
that $x$ is in $\Lambda(t)$. Indeed, $u_\eps(x,t) = 0$ and
\begin{equation}
\label{eq:comp_ueps_veps}
  v_\eps^-(x,t) \le u_\eps(x,t) = 0 \le v^+_\eps(x,t).
\end{equation}
Assume that $x \not \in \Lambda(t)$. As $\Lambda_\eps^\pm(t) \subseteq
\Lambda(t)$, we have $x \not \in \Lambda_\eps^\pm(t)$, and thus, for
$\eps$ sufficiently small, we deduce that $v_\eps^-(x,t)$ and
$v_\eps^+(x,t)$ have the same sign. This contradicts
\eqref{eq:comp_ueps_veps}, and we conclude that $x \in \Lambda(t)$. We
then notice that
\[
  v^-_\eps(x,t) = q_\eps\biggpars{\frac{d_\eps^-(x,t)}{\eps}} + O(\eps^2)
  \le 0
\]
because
\[
 \eps^2 (\bar{h} - \nabla d \cdot \nabla g) \xi_\eps(y) = O(\eps^2),
\]
and hence
\[
  q_\eps\biggpars{\frac{d_\eps^-(x,t)}{\eps}} \le O(\eps^2).
\]
As $q'(0) = -1$, we get that
\[
  \frac{d^-_\eps(x,t)}{\eps} \ge O(\eps^2),
\]
which shows that $d(x,t) \ge O(\eps^2 \abs{\log \eps}^2)$. In a similar
way, noticing that
\[
  v^+_\eps(x,t) = q_\eps\biggpars{\frac{d_\eps^+(x,t)}{\eps}}
  + O(\eps^2) \ge 0,
\]
we get that
\[
  \frac{d^+_\eps(x,t)}{\eps} \le O(\eps^2),
\]
and $d(x,t) \le O(\eps^2 \abs{\log \eps}^2)$. We conclude that
\[
  \abs{d(x,t)} \le O(\eps^2 \abs{\log \eps}^2),
\]
and \eqref{eq:inclusion_th} is proved.
\end{proof}

\section{Application to mean curvature flow with conservation of the volume}
\label{sec:conserv_vol}
%=======================================================================

In this section, we compare two phase field models for the approximation
of motion by mean curvature with conservation of the volume:
\begin{equation}
\label{eq:mc_conserved}
  V_n = \kappa - \fint_{\partial \Omega(t)} \kappa \dsigma.
\end{equation}
As explained in the introduction, this motion is usually approximated by
the following phase field equation (see \cite{Bronsard1997}):
\begin{equation}
\label{eq:classic_model}
  \partial_t u
  = \Delta u - \frac{1}{\eps^2} W'(u)
  + \frac{1}{\eps^2} \fint_Q W'(u) \dx.
\end{equation}
The last term in this equation can be understood as a Lagrange multiplier
for the mass constraint
\[
  \frac{d}{dt} \int_Q u \dx = 0.
\]
(Note that in this section, the potential $W$ we consider has its wells
at $0$ and $1$.) In the sequel, we compare this equation to
\begin{equation}
\label{eq:new_model}
  \partial_t u = \Delta u - \frac{1}{\eps^2} W'(u)
    + \frac{1}{\eps^2} \frac{\sqrt{2 W(u)}}{\int_{\R^d} \sqrt{2 W(u)} \dx}
    \int_{\R^d} W'(u) \dx.
\end{equation}
derived along the same lines as \eqref{eq:ac_forcing_new}. The form
of the last term is again related to conservation of mass, since the
volume average of the right-hand side is easily seen to vanish.

There is no general proof of convergence of solutions of
\eqref{eq:classic_model} and \eqref{eq:new_model} to the motion
\eqref{eq:mc_conserved}. However, \eqref{eq:classic_model} is commonly
used in computations. The numerical experiments presented further
show that \eqref{eq:new_model} conserves volume with a higher degree
of accuracy than \eqref{eq:classic_model}. Our aim in this section is
to try so substantiate this claim, although our arguments are formal.

In both cases, the last term could be interpreted as a forcing term, by
setting
\[
  g_\eps(t)
  = \frac{1}{\eps c_W} \fint_Q W'(u) \dx
\]
in the first model and
\[
  \tilde{g}_\eps(t) = \frac{1}{\eps}
  \frac{ \int_{\R^d} W'(u) \dx }{ \int_{\R^d} \sqrt{2 W(u)} \dx }
\]
in the second. Formally, one recovers the expressions of
\eqref{eq:ac_forcing} and \eqref{eq:ac_forcing_new}. However the
forcing terms here depend on the solutions of \eqref{eq:classic_model}
and \eqref{eq:new_model}. Assuming that one can generalize the results
of section \ref{sec:formal_analysis} (notwithstanding this dependence
of $g_\eps$ and $\tilde{g}_\eps$), we expect solutions $u_\eps$ and
$\tilde{u}_\eps$ of \eqref{eq:classic_model} and \eqref{eq:new_model}
to have the following asymptotic behavior:
\begin{align}
\label{eqn:asymp_accc}
  u_\eps(x,t)
    &= q\biggpars{\frac{d\bigpars{x, \Omega_\eps(t)}}{\eps}}
    + \eps g_\eps(t) \eta\biggpars{\frac{d\bigpars{x, \Omega_\eps(t)}}{\eps}}
    + O(\eps^2),  \\
\label{eqn:asymp_accn}
  \tilde{u}_\eps(x,t)
    &= q\biggpars{\frac{d\bigpars{x, \tilde{\Omega}_\eps(t)}}{\eps}}
    + \eps^2 h(x,t) \xi \biggpars{\frac{d\bigpars{x, \tilde{\Omega}_\eps(t)}}{\eps}}
    + O(\eps^3),
\end{align}
where $\Omega_\eps(t)$ (resp. $\tilde{\Omega}_\eps(t)$) denotes the set
contained inside the level line $\set{ u_\eps(x,t) = \frac{1}{2} }$
(resp. $\set{ \tilde{u}_\eps(x,t) = \frac{1}{2} }$), and $q$, $\eta$,
$\xi$ are the profiles defined in \eqref{eq:dev_asymp_u},
\eqref{eq:def_eta} and \eqref{eq:def_xi}. We note that these profiles
only depend on the choice of the potential $W$. Following
\eqref{eq:dev_u_complete}, we see that as $\tilde{g}_\eps$ does not
depend on $x$, only $h$ appears in the term of order $2$ of
$\tilde{u}_\eps$.

We first establish the connection between the mass $\int_Q u_\eps \dx $
(respectively $\int_Q \tilde{u}_\eps \dx $) and the volume
$\abs{\Omega_\eps(t)}$ (respectively $\abs{\tilde{\Omega}_\eps(t)}$).

\begin{proposition}
\label{prop:volume}
Let $E$ be a regular bounded domain of $\R^d$, and let
\[
  v_\eps(x) = q\biggpars{\frac{d(x, E)}{\eps}}.
\]
Assume that $q$ is symmetric, i.e. $q(s) = 1 - q(-s)$,
 and that $q$ decays exponentially to $0$ as $s \to + \infty$. Then
\[
  \abs{E} = \int_{\R^d} v_\eps \dx + O(\eps^2).
\]
\end{proposition}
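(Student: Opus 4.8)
The plan is to compute the difference $\int_{\R^d} v_\eps \dx - \abs{E}$ directly by slicing $\R^d$ along level sets of the signed distance function $d(\cdot, E)$, and to show that the symmetry of $q$ forces the leading-order correction to vanish, leaving an $O(\eps^2)$ remainder. Writing $\abs{E} = \int_{\R^d} \chi_E \dx$, where $\chi_E$ is the indicator of $E$, the integrand of interest is $v_\eps(x) - \chi_E(x) = q\bigl(d(x,E)/\eps\bigr) - \chi_E(x)$. Note that $\chi_E(x) = 1$ when $d(x,E) < 0$ and $\chi_E(x) = 0$ when $d(x,E) > 0$, so this difference is supported (up to exponentially small tails) in a thin tubular neighborhood of $\partial E$ of width $O(\eps)$.

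First I would introduce the coarea formula to rewrite the integral over $\R^d$ as an integral over level sets $\{d(x,E) = r\}$, obtaining
\[
  \int_{\R^d} \bigpars{q(d/\eps) - \chi_E} \dx
  = \int_{-\infty}^{+\infty} \bigpars{q(r/\eps) - \chi_{(-\infty,0)}(r)}
    \mathcal{H}^{d-1}\bigpars{\set{d = r}} \dr,
\]
where $\mathcal H^{d-1}$ denotes $(d-1)$-dimensional Hausdorff measure. Because $E$ is regular and bounded, the area of the level set $\{d = r\}$ admits a smooth expansion in $r$ near $r = 0$: writing $A(r) = \mathcal H^{d-1}(\{d = r\}) = A(0) + r A'(0) + O(r^2)$, with $A(0) = \mathcal H^{d-1}(\partial E)$. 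After the substitution $r = \eps s$, the $A(0)$ contribution pairs the \emph{odd} function $q(s) - \chi_{(-\infty,0)}$ (odd about $s = 0$ precisely by the symmetry hypothesis $q(s) = 1 - q(-s)$) against a constant, so $\int_\R (q(s) - \chi_{(-\infty,0)}(s)) \ds = 0$ and this term drops out. The next term, carrying the factor $\eps s A'(0)$, integrates an \emph{even} function $s\,(q(s) - \chi_{(-\infty,0)})$ that does not vanish; however this whole contribution is already of order $\eps \cdot \eps = \eps^2$ after the Jacobian $\dr = \eps \ds$ is accounted for, so it lands in the $O(\eps^2)$ remainder along with the $O(r^2)$ curvature terms.

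The main obstacle is making the slicing and the expansion of $A(r)$ rigorous and uniform: the identity $d(x,E) = r$ defines smooth parallel hypersurfaces only inside a tubular neighborhood where the nearest-point projection is well-defined, so I would first fix $D > 0$ small enough that $d$ is smooth on $\set{\abs{d} \le D}$ and split the integral into the tube $\set{\abs{d} \le D}$ and its complement. On the complement, $\abs{d} \ge D$ forces $\abs{q(d/\eps) - \chi_E}$ to be exponentially small in $\eps$ by the assumed exponential decay of $q$, so that piece is absorbed into $O(\eps^2)$. Inside the tube, the smoothness of $\partial E$ guarantees the expansion $A(r) = A(0) + O(r)$ with a remainder controlled uniformly by the curvatures of $\partial E$, which validates the above cancellation and remainder estimate. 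Finally I would collect the surviving terms, confirm that the exponential tails, the $\eps^2$-order even contribution, and the curvature corrections are each $O(\eps^2)$, and conclude $\abs{E} = \int_{\R^d} v_\eps \dx + O(\eps^2)$.
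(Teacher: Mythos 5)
Your proposal is correct and follows essentially the same route as the paper: both apply the coarea formula to slice along level sets of the signed distance, exploit the symmetry $q(s)=1-q(-s)$ to kill the order-$\eps$ term (you phrase it as the oddness of $q(s)-\chi_{(-\infty,0)}(s)$ against the constant term $A(0)$, the paper folds the negative half onto the positive half to get the difference $h(s)-h(-s)=O(s)$), and control the remainder via the smoothness of the level-set area near $r=0$ together with the exponential decay of $q$ for the tails. The two presentations are interchangeable, so no substantive difference to report.
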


\begin{proof}
Using the co-area formula,
\begin{align*}
  \int_{\R^d} v_\eps \dx
  &= \int_{\R^d} q \biggpars{\frac{d(x,E)}{\eps}} \dx \\
  &= \int_{\R} h(s) q \biggpars{\frac{s}{\eps}} \ds \\
  &= \int_{-\infty}^0 h(s) \ds
    + \int_{-\infty}^0 h(s)
    \biggpars{ q\biggpars{\frac{s}{\eps}} - 1 } \ds
    + \int_0^{+\infty} h(s) q\biggpars{\frac{s}{\eps}} \ds \\
  &= \abs{E} - \int_0^{+\infty} h(-s) q\biggpars{\frac{s}{\eps}} \ds
    + \int_0^{+\infty} h(s) q\biggpars{\frac{s}{\eps}} \ds \\
  &= \abs{E} + \int_0^{+\infty} \bigpars{ h(s) - h(-s) }
    q\biggpars{\frac{s}{\eps}} \ds \\
  &= \abs{E} + \eps \int_0^{+\infty}
    \bigpars{ h(s\eps) - h(-s\eps) } q(s) \ds
\end{align*}
where $h(s) = \abs{D \chi_{ \set{d(x,E) \le s} }}(\R^d)$ is the
perimeter of the level line $s$ of the signed distance function to $E$.
Since $E$ is smooth, one can estimate $h(s\eps) - h(-s\eps) = 2 s
\eps h'(0) + O(s^2 \eps^2)$ for $s$ in $(0, \abs{\log \eps})$.
Furthermore, since $q$ is exponentially decreasing to $0$ as $s \to
+\infty$, all the moments $\int_{s > 0} s^n q(s) \ds$ are finite.
Thus, we can estimate
\[
  \biggabs{ \int_0^{\abs{\log \eps}}
    \bigpars{ h(s\eps) - h(-s\eps) } q(s) \ds}
  \le \biggabs{ \int_0^{\abs{\log \eps}}
    \bigpars{ 2 s \eps h'(0) + C s^2 \eps^2 } q(s) \ds}
  = O(\eps).
\]
Moreover, since $h(s) \sim s^{d-1}$ as $s \to +\infty$, and since $h$
is bounded on $(-\infty, 0)$, it is easy to check that
\begin{gather*}
  \int_{\abs{\log \eps}}^{+\infty} h(s \eps) q(s) \ds
  \le C \eps^{d-1} \int_{\abs{\log \eps}}^{+\infty} s^{d-1} q(s) \ds
  = O(\eps^{d-1}), \\
  \int_{\abs{\log \eps}}^{+\infty} h(-s\eps) q(s) \ds
  \le C \int_{\abs{\log \eps}}^{+\infty} q(s) \ds
  = O(\eps).
\end{gather*}
It follows that
\[
  \int_{\R^d} v_\eps \dx = \abs{E} + O(\eps^2).
\]
\end{proof}

The result of proposition \ref{prop:volume} still holds on a fixed
bounded set $Q$ that strictly contains $E$ when $\eps$ is sufficiently
small. This again is a consequence of the exponential decay of $q$.
Recalling the asymptotic form of $u_\eps$, it follows from the above
proposition that, for the classical model \eqref{eq:classic_model},
\[
  \int_Q u_\eps \dx
  = \int_Q q \biggpars{\frac{d\bigpars{x, \Omega_\eps(t)}}{\eps}} \dx
  + \eps g_\eps \int_Q \eta
  \biggpars{\frac{d\bigpars{x, \Omega_\eps(t)}}{\eps}} \dx + O(\eps^2).
\]
In general, the term of order $\eps$ does not vanish, since
\[
  \lim_{s \to \pm\infty} \eta(s) = \frac{c_W}{W''(0)} \neq 0
\]
when $q$ is symmetric, and so
\[
  \int_Q u_\eps \dx = \abs{\Omega_\eps(t)} + O(\eps).
\]
This explains why we cannot expect the model \eqref{eq:classic_model}
to converge to the motion \eqref{eq:mc_conserved} with a better rate
than $O(\eps)$. As for the model \eqref{eq:new_model}, we have
\[
  \int_Q \tilde{u}_\eps \dx
  = \int_Q q\biggpars{\frac{d\bigpars{x, \tilde{\Omega}_\eps(t)}}{\eps}} \dx
  + \eps^2  \int_Q h \xi
  \biggpars{\frac{d\bigpars{x, \tilde{\Omega}_\eps(t)}}{\eps}} \dx + O(\eps^3),
\]
that is
\[
  \int_Q \tilde{u}_\eps \dx = \abs{\tilde{\Omega}_\eps(t)} + O(\eps^2),
\]
which presents a higher degree of accuracy on volume conservation.

We proved in the last section that solutions of
\eqref{eq:ac_forcing_new} converge as $\eps \to 0$ to motion by mean
curvature with a forcing term \eqref{eq:mc_motion_forcing}. Formally,
the phase field equation \eqref{eq:new_model} can be rewritten
\[
  \partial_t u = \Delta u - \frac{1}{\eps^2}
  \biggpars{ W'(u) - \eps \sqrt{2 W(u)} \tilde{g}_\eps }.
\]
The following property shows that, under the assumption
\eqref{eqn:asymp_accn}, $\tilde{g}_\eps$ converges to $\fint_{\partial
\Omega_\eps} \kappa \dsigma$, which is formally consistent to the
limiting motion \eqref{eq:mc_motion_forcing}.

\begin{proposition}
\label{prop:forcing2}
Let $E$ be a regular bounded domain of $\R^d$, and let
\[
  v_\eps(x) = q \biggpars{ \frac{d(x,E)}{\eps} }.
\]
Assume that $q$ is symmetric, i.e. $q(s) = 1 - q(-s)$, and that $q$
decays exponentially to $0$ as $s \to +\infty$. Then
\begin{align*}
  \frac{ \int_{\R^d} g \sqrt{2 W(v_\eps)} \dx }
  { \int_{\R^d} \sqrt{2 W(v_\eps)} \dx }
  &= \fint_{\partial E} g \dsigma + O(\eps^2), \\[2ex]
  \frac{1}{\eps} \frac{ \int_{\R^d} W'(v_\eps) \dx }
  { \int_{\R^d} \sqrt{2 W(v_\eps)} \dx }
  &= -\fint_{\partial E} \kappa \dsigma + O(\eps^2).
\end{align*}
\end{proposition}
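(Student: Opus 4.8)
The plan is to reduce both quotients to one-dimensional integrals via the co-area formula, exactly as in the proof of Proposition \ref{prop:volume}, and then to exploit the symmetry of $q$ to show that the first-order corrections cancel, leaving an $O(\eps^2)$ error.

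First I would rewrite the integrands using the profile identities. Since $q'' = W'(q)$ and $q' = -\sqrt{2W(q)}$, one has $\sqrt{2 W(v_\eps)} = -q'\bigpars{d(\cdot,E)/\eps}$ and $W'(v_\eps) = q''\bigpars{d(\cdot,E)/\eps}$. Writing $d = d(x,E)$ and applying the co-area formula, for any $f$ depending only on $d$,
\[
  \int_{\R^d} f\bigpars{d/\eps} \dx = \int_\R f(s/\eps) P(s) \ds = \eps \int_\R f(\sigma) P(\eps \sigma) \dsigma,
\]
where $P(s)$ is the perimeter of the level set $\set{d = s}$ (the function written $h(s)$ in the proof of Proposition \ref{prop:volume}; I rename it to avoid clashing with $h = \sum \kappa_i^2$). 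Likewise the numerator of the first quotient becomes $\eps \int_\R \bigpars{-q'(\sigma)} P_g(\eps\sigma) \dsigma$ with $P_g(s) = \int_{\set{d=s}} g \dsigma$. The geometry of parallel surfaces gives the smooth expansions $P(s) = \abs{\partial E} - s \int_{\partial E} \kappa \dsigma + O(s^2)$ and $P_g(s) = \int_{\partial E} g \dsigma + O(s)$ near $s = 0$, the first coming from the Jacobian factor $\prod_i (1 - s \bar\kappa_i)$ already used for $\Delta d$ in \eqref{eq:distance}.

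Next I would Taylor-expand $P(\eps\sigma)$ and $P_g(\eps\sigma)$ and integrate term by term. Here the hypothesis $q(s) = 1 - q(-s)$ is essential: it makes $q'$ even and $q''$ odd. Consequently $\int_\R (-q') \dsigma = 1$ and $\int_\R \sigma(-q') \dsigma = 0$, so the common denominator equals $\eps \abs{\partial E}\bigpars{1 + O(\eps^2)}$ and the numerator of the first quotient equals $\eps \int_{\partial E} g \dsigma \, \bigpars{1 + O(\eps^2)}$; dividing yields the first claim. For the second quotient, integration by parts (using $q(\pm\infty) = 0, 1$ and the exponential decay of $q'$) gives $\int_\R q'' \dsigma = 0$ and $\int_\R \sigma q'' \dsigma = 1$, while the next term $\int_\R \sigma^2 q'' \dsigma$ vanishes by oddness of $q''$; hence the numerator is $\eps^2 P'(0)\bigpars{1 + O(\eps^2)} = -\eps^2 \int_{\partial E} \kappa \dsigma \, \bigpars{1 + O(\eps^2)}$, and dividing by the denominator and by $\eps$ produces $-\fint_{\partial E} \kappa \dsigma + O(\eps^2)$.

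The main obstacle is rigour in the tails: the expansions of $P$ and $P_g$ hold only in a neighborhood of $s = 0$ (indeed $P(s) \sim s^{d-1}$ as $s \to +\infty$), so the term-by-term integration must be restricted to $\abs{\sigma} \le \abs{\log \eps}$, with the remaining range controlled by the exponential decay of $q$ and $q'$, precisely as in Proposition \ref{prop:volume}. The delicate point is to verify that every first-order-in-$\eps$ contribution cancels — which is exactly where the symmetry of $q$ enters — so that the error is genuinely $O(\eps^2)$ and not merely $O(\eps)$.
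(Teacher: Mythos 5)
Your proof is correct, and the first identity is handled exactly as in the paper: the co-area formula reduces everything to $-\int_\R q'(\sigma)\,h(\eps\sigma)\,d\sigma$ for $h(s)=\int_{\{d=s\}}g\dsigma$, the evenness of $q'$ kills the term in $\eps\,h'(0)$, and the tails beyond $\abs{\sigma}\le C\abs{\log\eps}$ are absorbed by the exponential decay. For the second identity your route differs slightly from the paper's. The paper observes that $\int_{\R^d}\Delta v_\eps\dx=0$ and subtracts it, which turns $\frac{1}{\eps^2}\int_{\R^d}W'(v_\eps)\dx$ into $-\frac{1}{\eps}\int_{\R^d}q'\bigl(d/\eps\bigr)\Delta d\dx$; it then reapplies the first-part lemma to $h(s)=\int_{\{d=s\}}\Delta d\dsigma$, whose value at $0$ is directly $-\int_{\partial E}\kappa\dsigma$. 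You instead keep $W'(v_\eps)=q''(d/\eps)$, pass to the co-area variable, and do the integration by parts there, using the moments $\int_\R q''=0$, $\int_\R\sigma q''=1$, $\int_\R\sigma^2 q''=0$ together with $P'(0)=-\int_{\partial E}\kappa\dsigma$ from the Jacobian of parallel surfaces. This is the same integration by parts carried out in the one-dimensional variable rather than in $\R^d$; what it buys you is that you never need the paper's (questionable for $d\ge 3$) remark that $s\mapsto\int_{\{d=s\}}\Delta d\dsigma$ is locally constant --- you only need $P$ to admit a third-order Taylor expansion at $0$, which smoothness of $E$ provides. Two small points of care: your written expansion $P_g(s)=\int_{\partial E}g\dsigma+O(s)$ is not by itself enough for the claimed $O(\eps^2)$ (you need the genuine first-order Taylor term so that $\int_\R\sigma q'=0$ can cancel it, as you in fact use in the next paragraph), and the cancellation $\int_\R\sigma^2q''=0$ is indispensable for the second quotient, since without it the error after dividing by $\eps$ would only be $O(\eps)$; you correctly identify both.
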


\begin{proof}
To prove the first equality, recall that $q$ satisfies $\sqrt{2 W(q)}
= -q'$, and that $q'$ is even. Let $h \colon \R \to \R$ be a continuous
function, differentiable at $s = 0$, which grows polynomially in $s$.
Since $\int_\R q'(s) \ds = -1 $, arguing as in the proof of proposition
\ref{prop:volume},
it follows that
\begin{align*}
  \frac{1}{\eps} \int_\R
  \sqrt{2 W\biggpars{ q\biggpars{\frac{s}{\eps}} }} h(s) \ds
  &= -\int_\R \frac{1}{\eps} q'\biggpars{\frac{s}{\eps}} h(s) \ds \\
  &= -\int_\R q'(s) h(s\eps) \ds \\
  &= -\int_0^{+\infty} \bigpars{ h(s\eps) + h(-s\eps)} q'(s) \ds \\
  &= -\int_0^{3 \abs{\log \eps}} \bigpars{ h(s\eps) + h(-s\eps)} q'(s) \ds + O(\eps^2) \\
  &= -\int_0^{3 \abs{\log \eps}} \bigpars{ 2 h(0) + C s^2 \eps^2} q'(s) \ds + O(\eps^2) \\
  &= h(0) + O(\eps^2).
\end{align*}
Next, the co-area formula yields
\[
  \frac{1}{\eps} \int_{\R^d} g \sqrt{2 W(v_{\eps})} \dx
  = \frac{1}{\eps} \int_\R
  \biggpars{ \int_{d(x,E) = s } g \dsigma }
  \sqrt{2 W\biggpars{ q\biggpars{\frac{s}{\eps}} }} \ds.
\]
Since $E$ is smooth, and since the forcing term $g$ is bounded,
the function
\[
  h \colon s \mapsto \int_{d(x,E) = s} g \dsigma
\]
is continuous, differentiable at $s = 0$ and has polynomial growth
at infinity: $h(s) \sim s^{d-1}$ when $s \to +\infty$.  We can then
apply the previous estimate to obtain
\[
  \frac{1}{\eps} \int_{\R^d} g \sqrt{2 W(v_\eps)} \dx
  = \int_{d(x, E) = 0} g \dsigma + O(\eps^2)
  = \int_{\partial E} g \dsigma + O(\eps^2).
\]
We notice that the same argument with $g = 1$ leads to
\[
  \frac{1}{\eps} \int_{\R^d} \sqrt{2 W(v_\eps)} \dx
  = \int_{d(x,E)= 0} \dsigma + O(\eps^2)
  = \abs{\partial E} + O(\eps^2),
\]
so that combined with the previous equality, we obtain
\[
  \frac{\int_{\R^d} g \sqrt{2 W(v_\eps)} \dx}
  {\int_{\R^d} \sqrt{2 W(v_\eps)} \dx}
  = \fint_{\partial E} g \dsigma + O(\eps^2).
\]

Let us now prove the second equality. Recall that
\begin{align*}
  v_\eps &= q \biggpars{ \frac{d(x,E)}{\eps} }, \\
  \nabla v_\eps &= \frac{1}{\eps} q' \biggpars{ \frac{d(x,E)}{\eps} }
\nabla d(x,E), \\
  \Delta v_\eps &= \frac{1}{\eps^2} q'' \biggpars{ \frac{d(x,E)}{\eps}
}
    + \frac{1}{\eps} q'\biggpars{ \frac{d(x,E)}{\eps} } \Delta
d(x,E).
\end{align*}
As $q'' = W'(q)$, it follows that
\begin{align*}
  \frac{1}{\eps^2} \int_{\R^d} W'(v_\eps) \dx
  &= \int_{\R^d} \biggpars{\frac{1}{\eps^2} W'(v_\eps) - \Delta v_\eps} \dx \\
  &= -\int_{\R^d} \frac{1}{\eps} q'\biggpars{\frac{d(x,E)}{\eps} }
\Delta d(x,E) \dx \\
  &= \int_\R \biggpars{ \int_{d(x,E) = s} \Delta d(x,E) \dsigma }
\frac{1}{\eps} q' \biggpars{\frac{s}{\eps}} \ds.
\end{align*}
The function
\[
  s \mapsto h(s) = \int_{d(x,E) = s} \Delta d(x,E) \dsigma
\]
is not continuous on $\R$, but it is constant on a sufficiently small
neighborhood of $0$ (depending only on the topology of $E$) and grows
polynomially like $s^{d-1}$. Arguing as in the first part of the
proof, we obtain
\[
  \frac{1}{\eps^2} \int_{\R^d} W'(v_\eps) \dx
  = h(0) + O(\eps^2)
  = -\int_{\partial E} \kappa \dsigma + O(\eps^2),
\]
and
\[
  \frac{1}{\eps} \frac{ \int_{\R^d} W'(v_\eps) \dx }
  { \int_{\R^d} \sqrt{2 W(v_\eps)} \dx }
  = -\fint_{\partial E} \kappa \dsigma + O(\eps^2),
\]
which completes the proof.
\end{proof}

\begin{remark}
The first correcting term $\xi$ vanishes at infinity in the expansion
of $\tilde{u}_\eps$, and so would higher order terms. If we assume that
\eqref{eqn:asymp_accn} holds, a more careful analysis based on
proposition \ref{prop:forcing2} would show that
\[
  \frac{1}{\eps} \frac{ \int_{\R^d} W'(\tilde{u}_\eps) \dx }
  { \int_{\R^d} \sqrt{2 W(\tilde{u}_\eps)} \dx}
  = -\fint_{\partial \tilde{\Omega}_\eps} \kappa \dsigma + O(\eps^2).
\]
This heuristically justifies the use of \eqref{eq:new_model} as an
approximation to the motion \eqref{eq:mc_conserved}.
\end{remark}

\begin{remark}
We can generalize our previous argument to the case of interfaces
moving with normal velocity
\begin{equation}
\label{motiom_mc_g}
  V_n = \kappa + g - \fint_{\partial \Omega} (\kappa + g) \dsigma.
\end{equation}
The usual phase field approximation of such motions is based on
the equation
\begin{equation}
\label{modele_classique2}
  \partial_t u = \Delta u - \frac{1}{\eps^2} \biggpars{W'(u) - \eps c_W g}
  + \frac{1}{\eps^2} \fint_Q \biggpars{ W'(u) - \eps c_W g } \dx,
\end{equation}
where the last term can be understood as a Lagrange multiplier. As
explained above, one cannot expect that this model should converge to
the motion \eqref{motiom_mc_g} with a better rate than $O(\eps)$.
Generalizing our previous analysis, we may instead consider the
following modified phase field model, which should improve the accuracy:
\begin{multline}
\label{modele_elie2}
  \partial_t u = \Delta u - \frac{1}{\eps^2} \biggpars{ W'(u) - \eps g \sqrt{2W(u)} } \\
  + \frac{1}{\eps^2} \frac{\sqrt{2 W(u)}}{ \int_{\R^d} \sqrt{2 W(u)} \dx }
  \int_{\R^d} \biggpars{ W'(u) - \eps g \sqrt{2 W(u)} } \dx.
\end{multline}
\end{remark}

\section{Numerical method and simulations}
\label{sec:numeric}
%=======================================================================

In this section, we describe the numerical method we use for solving
\begin{equation}
\label{eq:gal}
\begin{dcases}
  \partial_t u = \Delta u - \frac{1}{\eps^2} F(u)
    & x \in Q \subset \R^d, \quad t \in [0,T], \\[2ex]
  u(x,0) = u_0(x),
    & x \in Q,
\end{dcases}
\end{equation}
where $F$ takes one of the following forms:
\begin{align*}
  W'_{\eps, g}(u)
    &= W'(u) - \eps c_W g, \\
  \tilde{W}'_{\eps, g}(u)
    &= W'(u) - \eps g \sqrt{2 W(u)}, \\
  W'_{\eps, g, vol}(u)
    &= W'(u) - \eps c_W g
    - \fint_Q \biggpars{W'(u) - \eps c_W g} \dx, \\
  \tilde{W}'_{\eps, g, vol}(u)
    &= W'(u) - \eps g \sqrt{2 W(u)} \\
    &\qquad - \frac{\sqrt{2 W(u)}}{\int_Q \sqrt{2 W(u)} \dx}
    \int_Q \biggpars{W'(u) - \eps g \sqrt{2 W(u)}} \dx.
\end{align*}
The first form corresponds to the Allen--Cahn equation with a
forcing term $g$. The second form corresponds to the modified
approximation introduced in section \ref{sec:new_model}. Forms 3
and 4 are the respective forms when the volume is conserved (see
\eqref{modele_classique2} and \eqref{modele_elie2}). We assume that
\[
  u_0 = q \biggpars{\frac{d(x, \partial \Omega_0)}{\eps}},
\]
where $\Omega_0$ is a smooth bounded set of $\R^d$ strictly contained
in the fixed box $Q = {[-\frac{1}{2}, \frac{1}{2}]}^d$, with $d
= 2$ or $3$. We assume also that during the evolution the sets
$\Omega_\eps(t)$ remain within $Q$, so that we may impose periodic
boundary conditions on $\partial Q$ to the solutions of \eqref{eq:gal}.

\subsection{Numerical scheme}
%-----------------------------------------------------------------------

Equation \eqref{eq:gal} is numerically approximated via a splitting
method between the diffusion and reaction terms.  We take advantage
of the periodicity to treat the diffusion part of the operator in
the Fourier space. More precisely, the value $u_\eps(x, t_n)$ at time
$t_n = t_0 + n \Delta t$ is approximated by
\[
  u_\eps^P(x, t_n)
  = \sum_{\max\limits_{1 \le k \le d} \abs{p_k} \le P}
    u_{\eps, p}(t_n) \exp (2 i \pi p \cdot x).
\]
In a first step, we set
\[
  u_\eps^P \biggpars{x, t_n + \frac{1}{2}}
  = \sum_{\max\limits_{1 \le k \le d} \abs{p_k} \le P}
    u_{\eps, p}\biggpars{t_n + \frac{1}{2}} \exp (2 i \pi p \cdot x),
\]
with
\[
  u_{\eps, p}\biggpars{t_n + \frac{1}{2}}
  = u_{\eps, p}(t_n) \exp (-4 \pi^2 \Delta t \abs{p}^2).
\]
We then add the reaction term:
\[
  u_\eps^P(x, t_n+1)
  = u_\eps^P\biggpars{t_n + \frac{1}{2}}
  - \frac{\Delta t}{\eps^2} F \biggpars{ u_\eps^P \biggpars{t_n + \frac{1}{2}} }.
\]
In practice, the first step is performed via a fast Fourier transform,
with a computational cost of $O(P^d \log P)$. The corresponding
numerical scheme turns out to be $L^\infty$-stable for the standard
Allen--Cahn equation with no forcing term, under the condition
\[
  \delta t \le M \eps^2,
\]
where $M = \bigpars{ \sup_{t \in [0,1]} W''(t) }^{-1}$. It can
be shown that this condition is also sufficient for the modified
potential $\tilde{W}_{\eps, g}$. We impose this constraint in the
following computations for all the choices of $F$. We use the double
well potential $W(s) = \frac{1}{2} s^2(1 - s)^2$, and $P$ represents
the number of Fourier modes in each dimension.

\subsection{Numerical tests}
%-----------------------------------------------------------------------

\paragraph{Convergence test with no forcing term.}
This test illustrates the convergence of our numerical scheme when
we consider the equation
\[
  \partial_t u = \Delta u - \frac{1}{\eps^2} W'(u),
\]
with no forcing term, nor volume conservation. The initial set
$\Omega_0$ is taken as a circle of radius $R_0 = 0.25$. It should
evolve as a circle, with radius $R(t) = \sqrt{{R_0}^2 - 2t}$, that
decreases to a point at the extinction time $t_{ext} = \frac{1}{2}
R_0^2$. Figure \ref{fig:allen-cahn1} represents $\Omega(t)$ at
different times, for the choice of parameters $P = 2^8$, $\Delta
t = 1/P^2$ and $\eps = 2/P$. Figure \ref{fig:allen-cahn2} shows
the error between calculated and theoretical extinction times for
different values of $\eps$, in logarithmic scale. The error behaves
like $O(\eps^2 \abs{\log \eps}^2)$ as expected. This indicates that,
with this choice of parameters, the error due to our numerical scheme
is negligible compared to the `modeling' error due to the approximation
of the motion by the phase field equation.

\begin{figure}[htbp]
\centering
\includegraphics[width=0.6\linewidth]{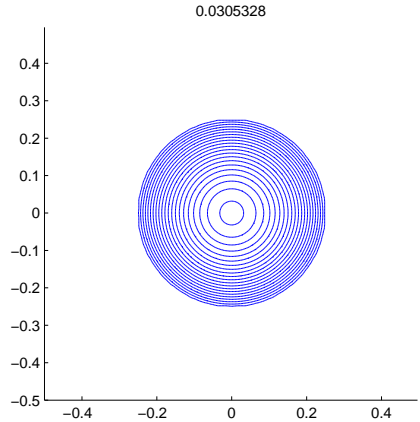}
\caption{Mean curvature flow of a circle: level set $\set{u_\eps(x,t) =
\frac{1}{2}}$ for different times.}
\label{fig:allen-cahn1}
\end{figure}

\begin{figure}[htbp]
\centering
\includegraphics[width=0.45\linewidth]{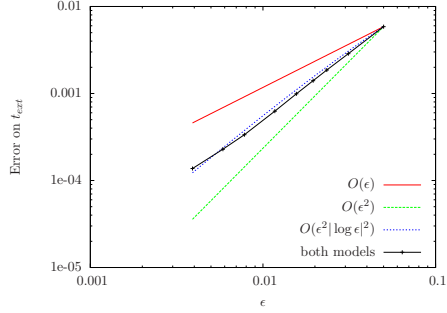}
\caption{Mean curvature flow of a circle: error on the extinction time
for different values of $\eps$ (logarithmic scale).}
\label{fig:allen-cahn2}
\end{figure}

\paragraph{Convergence test with a constant forcing term.}
Here we compare the two phase field models \eqref{eq:ac_forcing}
and \eqref{eq:ac_forcing_new} as approximations to the motion
\eqref{eq:mc_motion_forcing}. Theoretically, both give an approximation
order of $O(\eps^2 \abs{\log \eps}^2)$.  We compare the numerical
solutions in the simple case where the forcing term is a constant: $g
= C_g$.  The initial condition $\Omega_0$ is a circle of radius $R_0$.
During the evolution, $\Omega(t)$ also remains circular, and its radius
$R$ satisfies
\[
  \frac{dR}{dt} = -\frac{1}{R} + C_g.
\]
Assuming that $C_g < 1 / R_0$, $\Omega(t)$ decreases to a point,
with extinction at the time
\[
  t_{ext} = -\frac{1}{C_g}
  \biggpars{ \frac{1}{C_g} \ln (1 - C_g R_0) + R_0 }.
\]
We represent on figure \ref{fig:allen-cahn-forcing} the error on
the extinction time for different values of $\eps$, in logarithmic
scale. We choose $C_g = 2$ and $C_g = -2$ respectively. Both models
give comparable results, and as expected by the theory, we again
observe a $O(\eps^2 \abs{\log \eps}^2)$ error.

\begin{figure}[htbp]
\hspace*{\fill}
\includegraphics[width=0.45\linewidth]{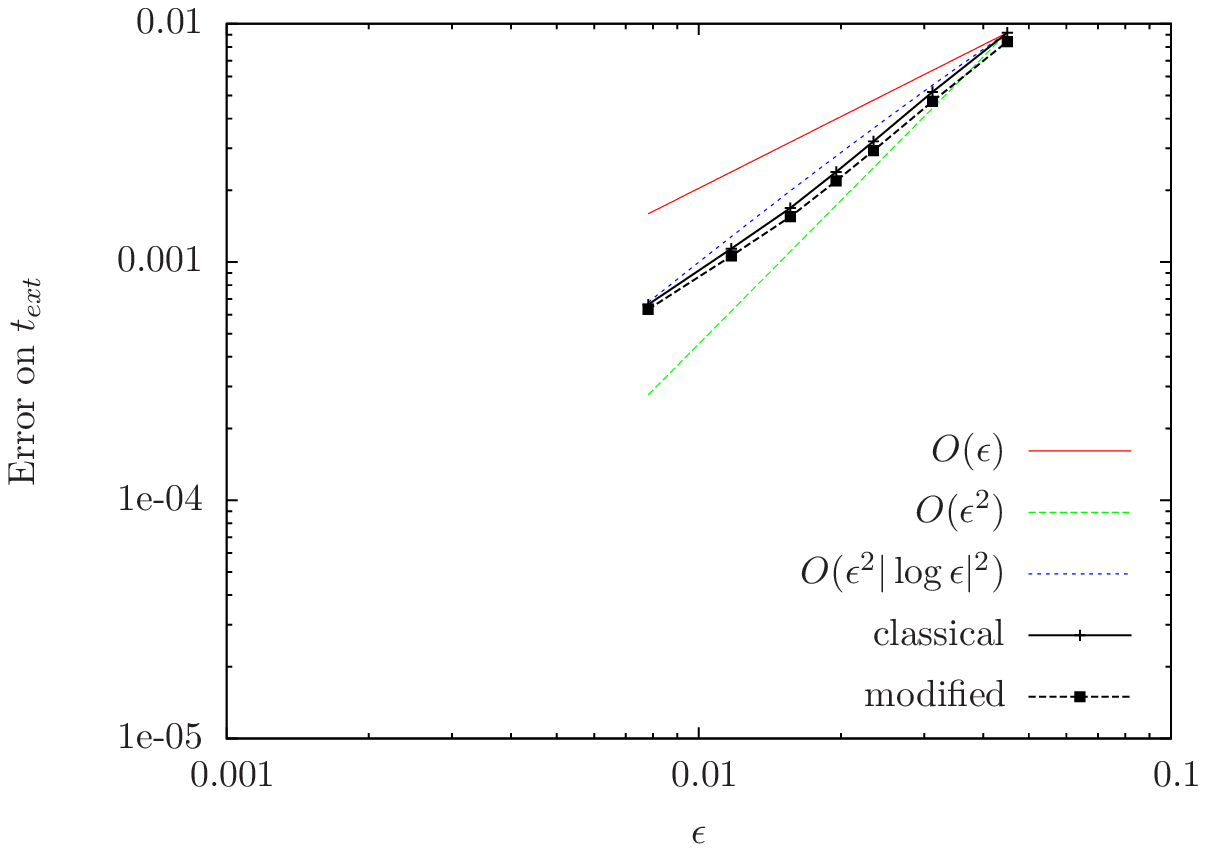}
\hfill
\includegraphics[width=0.45\linewidth]{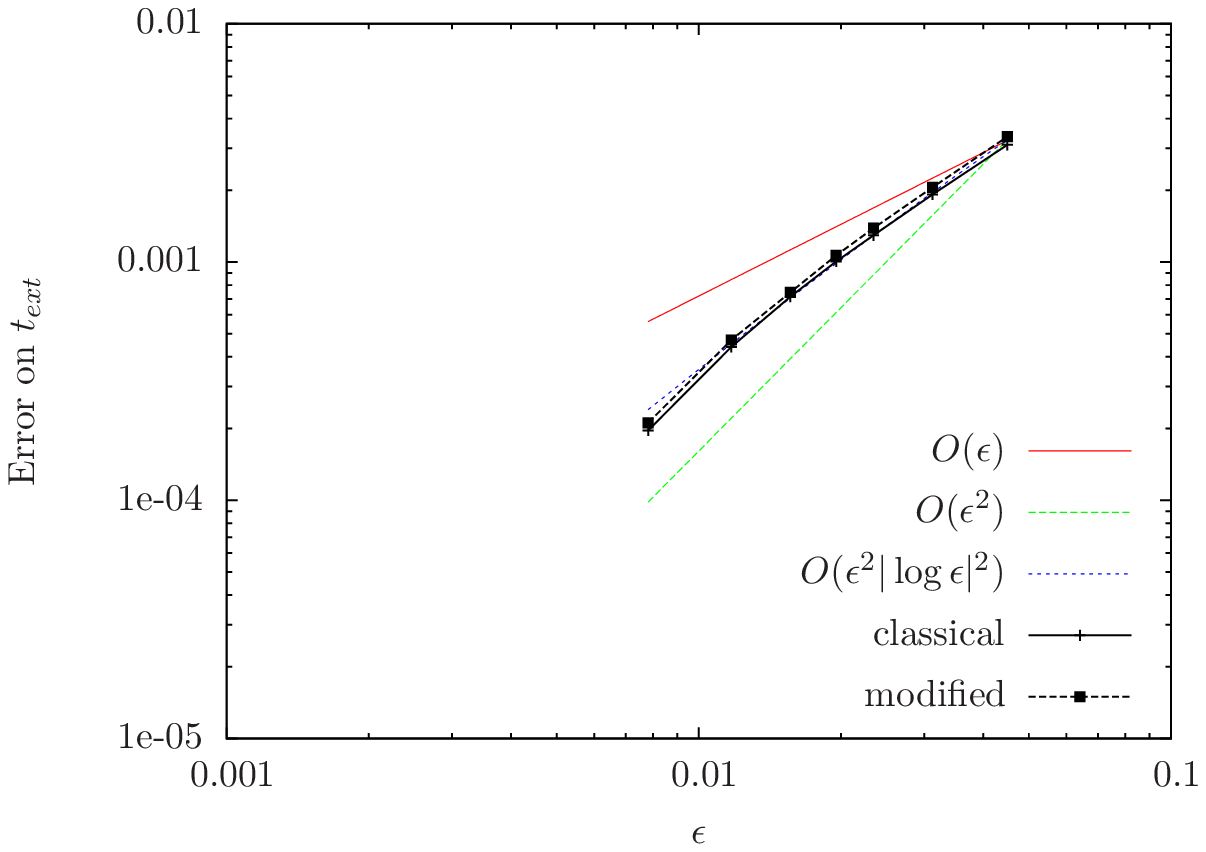}
\hspace*{\fill}
\caption{Mean curvature flow of a circle with a constant forcing term
$C_g$: error on the extinction time for different values of $\eps$
(logarithmic scale). Left: $C_g=2$. Right: $C_g=-2$.}
\label{fig:allen-cahn-forcing}
\end{figure}

\paragraph{Conservation of the volume with no forcing term.}
Here the initial configuration $\Omega_0$ is the union of two disjoints
circles of respective radii $r_0$ and $R_0$, with $r_0 < R_0$. As it
evolves by conserved mean curvature flow \eqref{eq:mc_conserved},
$\Omega$ remains the union of two circles, with radii $r$ and $R$
solutions of
\[
\begin{dcases}
  \frac{d r}{dt} = -\frac{1}{r} + \frac{2}{r + R}, \\[2ex]
  \frac{d R}{dt} = -\frac{1}{R} + \frac{2}{r + R}.
\end{dcases}
\]
It is easy to check that the smallest circle decreases and disappears
at extinction time
\[
  t_{ext} = -\frac{r_0R_0}{2} + \frac{R_0^2 + r_0^2}{4} \ln\biggpars{1
  + \frac{2r_0R_0}{(R_0 - r_0)^2}}.
\]
Meanwhile, the radius of the initially larger circle grows to a
maximal value
\[
  R_* = \sqrt{r_0^2 + R_0^2}
\]
at extinction time.  We presents results for $r_0 = 0.1$, $R_0 = 0.15$,
$t_{ext} = 0.0133$, and for the choice of numerical parameters $P =
2^8$, $\Delta t = 2^{-16}$. The evolution of $r$ and $R$ is plotted on
figure \ref{fig:test1_conserve_bicercle1} for both models
\eqref{eq:classic_model} and \eqref{eq:new_model} and for different
choices of $\eps$:. Figure \ref{fig:test1_conserve_bicercle3} depicts
the error on extinction time in logarithmic scale. The graph clearly
shows that the error on extinction time is of order $\eps$ for the
classical model, while it scales like $\eps^2$ for the modified model
\eqref{eq:new_model}.

\begin{figure}[htbp]
\hspace*{\fill}
\includegraphics[width=0.4\linewidth]{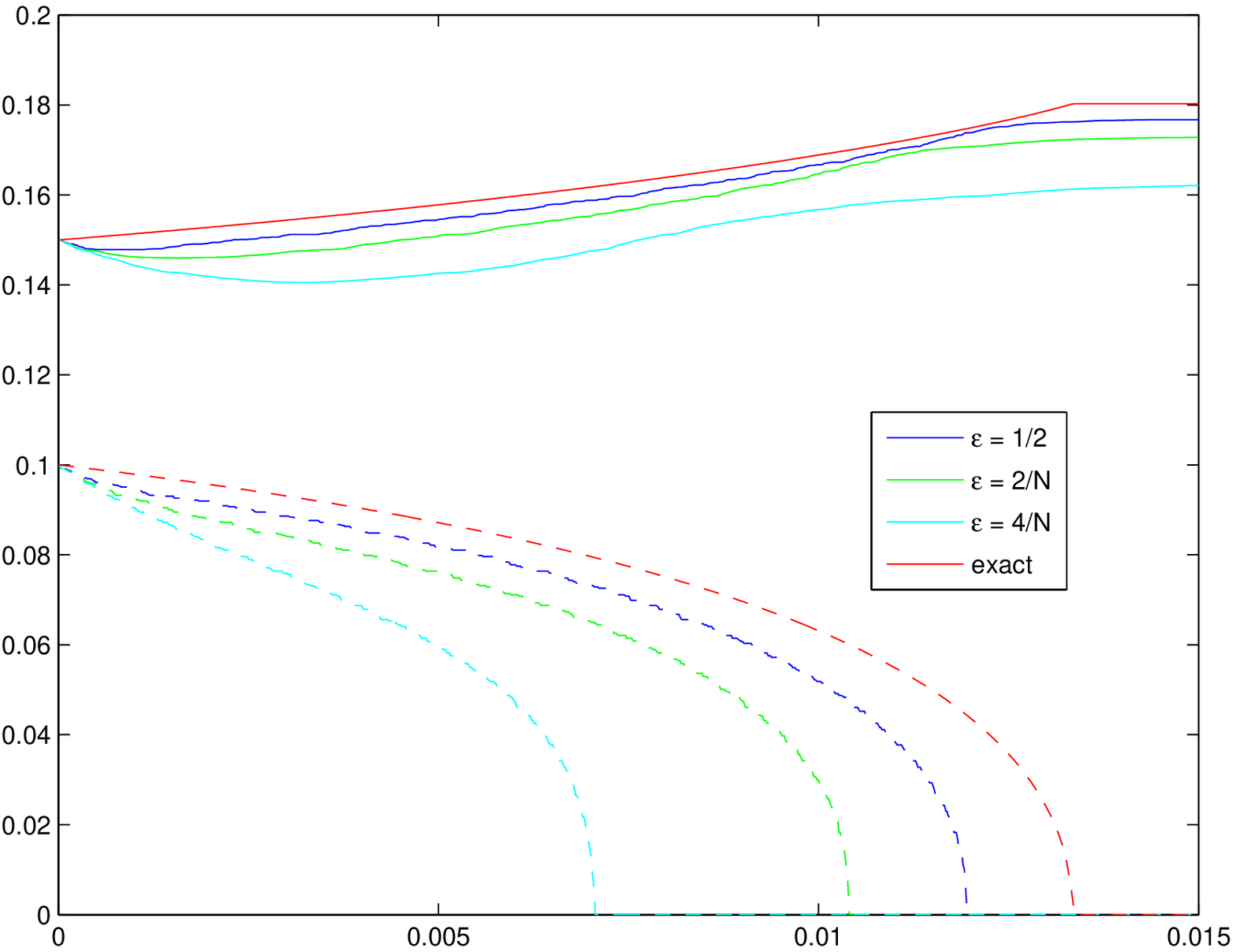}
\hfill
\includegraphics[width=0.4\linewidth]{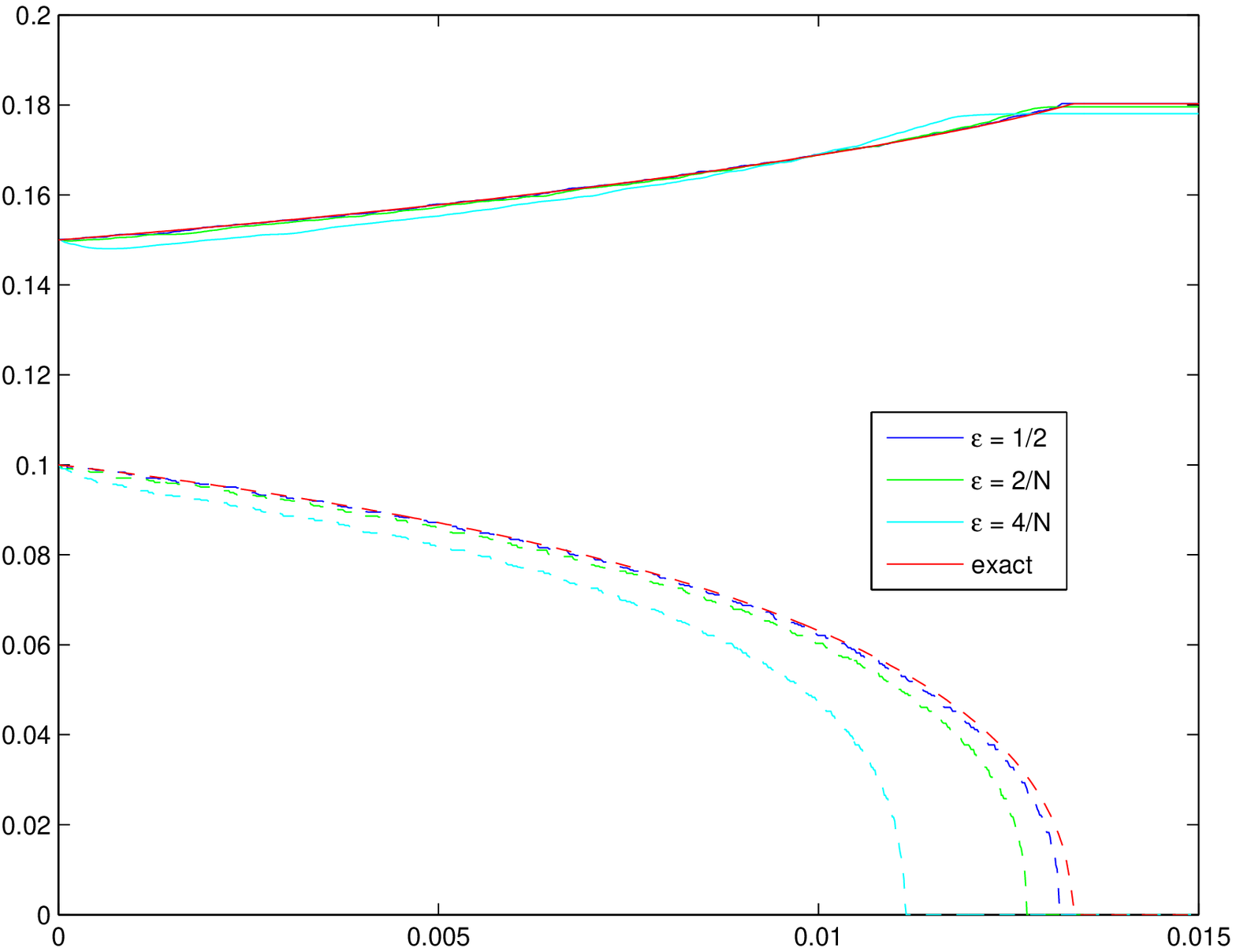}
\hspace*{\fill}
\caption{Conserved mean curvature flow of two disjoint circles:
evolution of the radii against time, for different values of $\eps$.
Left: classical model \eqref{eq:classic_model}. Right: modified model
\eqref{eq:new_model}.}
\label{fig:test1_conserve_bicercle1}
\end{figure}

\begin{figure}[htbp]
\centering
\includegraphics[width=0.45\linewidth]{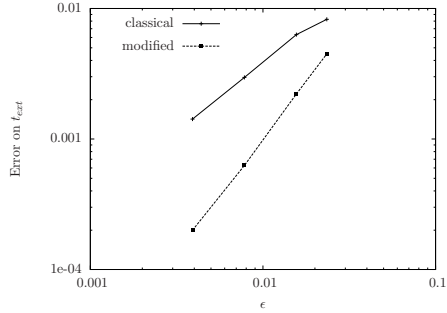}
\caption{Conserved mean curvature flow of two disjoint circles: error on
the extinction time against $\eps$ for the two models
\eqref{eq:classic_model} and \eqref{eq:new_model}.}
\label{fig:test1_conserve_bicercle3}
\end{figure}

\paragraph{Conservation of the volume with a non-zero forcing term.}
Volume losses may become important when approximating forced mean
curvature motion with the classical phase field model
\eqref{eq:classic_model}. The purpose of this test is to illustrate this
point. We choose $g$ to be an isotropic forcing term: $g(x) = c_g \cos(8
\pi \abs{x})$. The initial configuration $\Omega_0$ is the circle of
radius $R_0 = 0.25$ centered at $0$. It should remain stationary (i.e.
$\Omega(t) = \Omega_0$ for all $t$) whatever the value of the constant
$c_g$. Figure \ref{fig:test_conserve_forcing} represents the computed
evolutions using respectively \eqref{modele_classique2} and
\eqref{modele_elie2}. The numerical parameters are $P = 2^8$, $\eps =
2/P$ and $\Delta t=1/P^2$. Clearly, the value of $c_g$ has a significant
impact on the results when using \eqref{modele_classique2}.
Comparatively, the choice of $c_g$ as a negligible impact on the
evolutions computed with \eqref{modele_elie2}. This confirms the
arguments developed in section \ref{sec:conserv_vol}.

\begin{figure}[htbp]
\hspace*{\fill}
\includegraphics[width=0.4\linewidth]{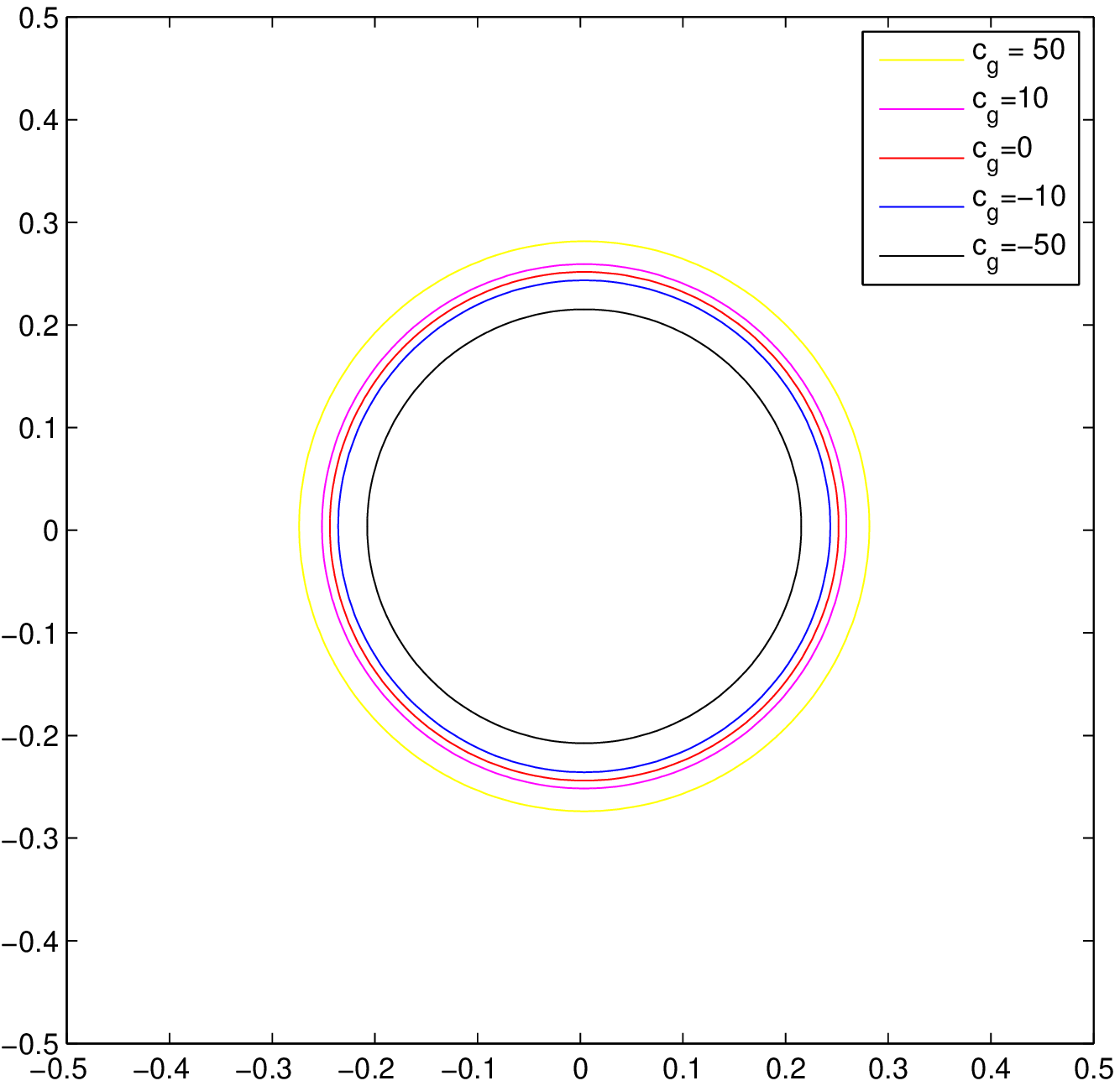}
\hfill
\includegraphics[width=0.4\linewidth]{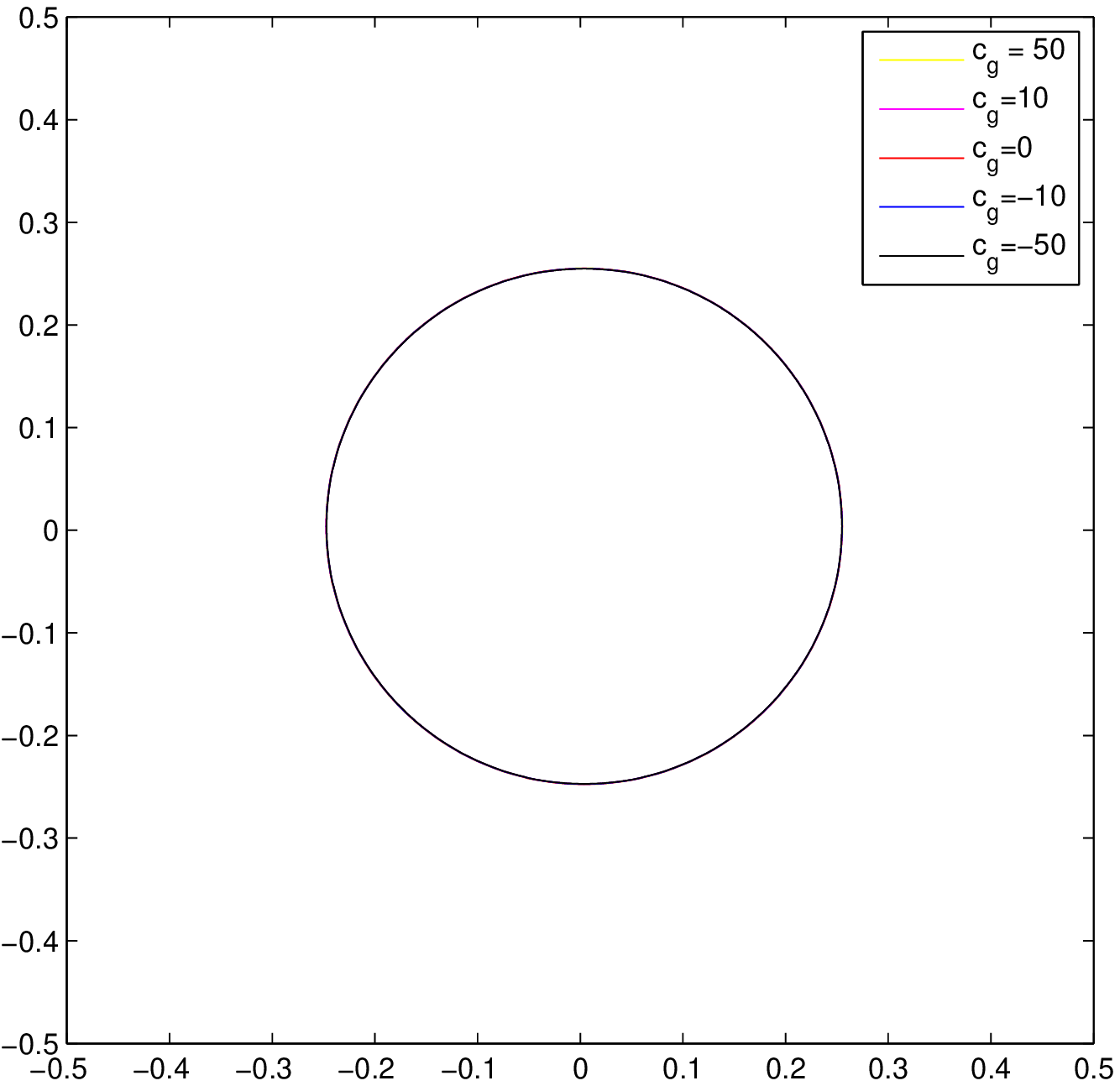}
\hspace*{\fill}
\caption{Conserved mean curvature flow of a circle with an additional
isotropic forcing term $g(x) = c_g \cos(8 \pi \abs{x})$: stationary
shape for different choices of $c_g$. Left: classical model
\eqref{modele_classique2}. Right: modified model \eqref{modele_elie2}.}
\label{fig:test_conserve_forcing}
\end{figure}

\paragraph{An example in 3D.}
Here we illustrate the benefits of our approach on a classical
three-dimensional example: the evolution of a torus with conservation of
the volume and no additional forcing term. This example provides a good
test case: because of the high values taken by the mean curvature,
standard approaches may fail to reproduce the motion correctly. One also
need to handle the topological change when we move from a toric shape to
a spherical one.

We clearly observe on figure \ref{fig:torus} that the classical model
\eqref{modele_classique2} leads to significant volume losses compared to
our modified model \eqref{modele_elie2}. We plot on figure
\ref{fig:vol_torus} the volume against time for both approaches. The
volume error goes up to 30\% for the classical model, whereas it is
always strictly below 5\% for ours. We notice that, in both cases, the
error decreases in the second part of the evolution. Indeed, it is clear
that the numerical error is maximal when the average mean curvature is
maximal; when the topological change occurs, the average mean curvature
instantly jumps to a smaller value, as the points where the mean
curvature is the highest just disappear from the surface.

\begin{figure}[htbp]
\hspace*{\fill}
\includegraphics[width=0.2\linewidth]{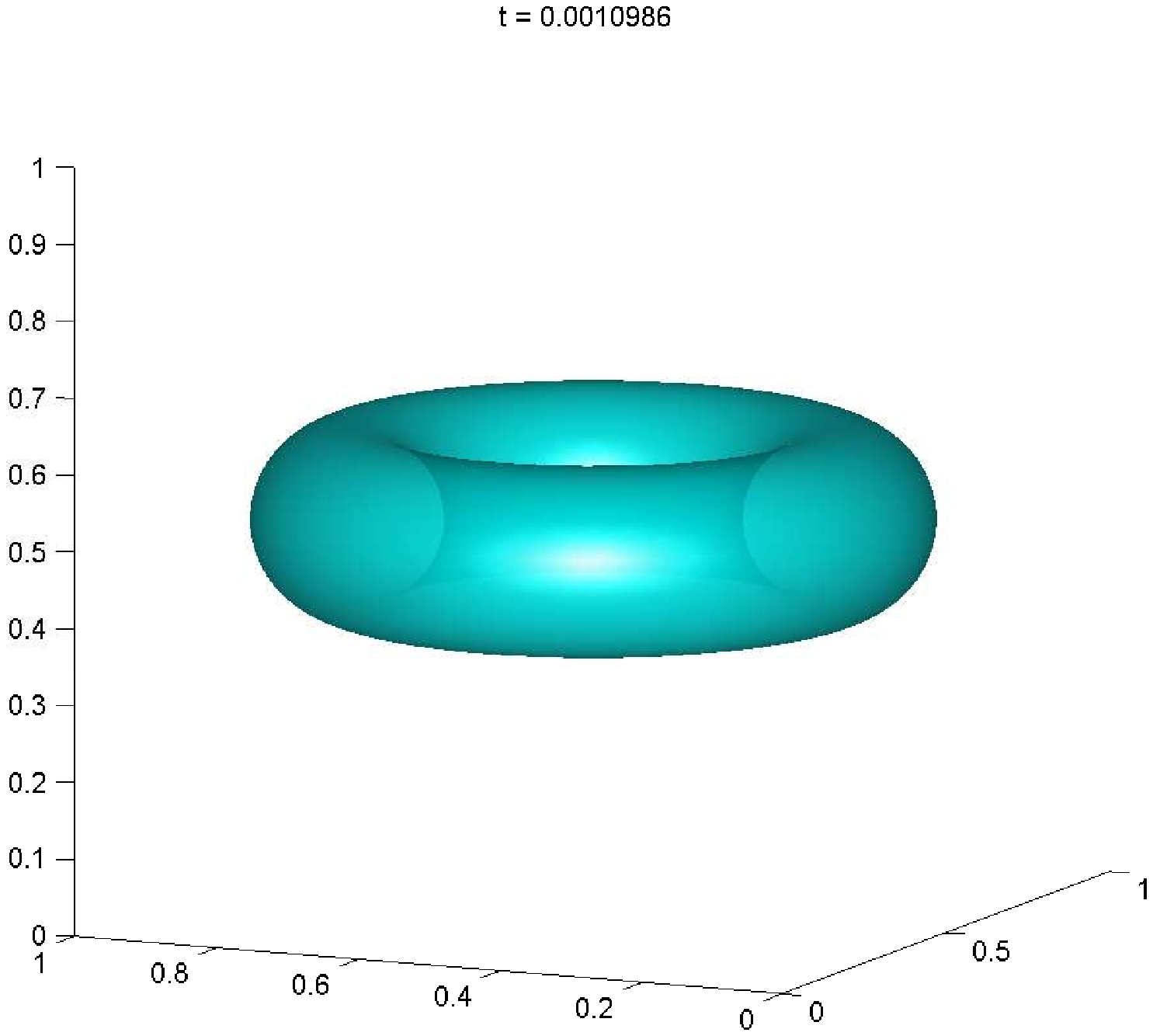}\hfill
\includegraphics[width=0.2\linewidth]{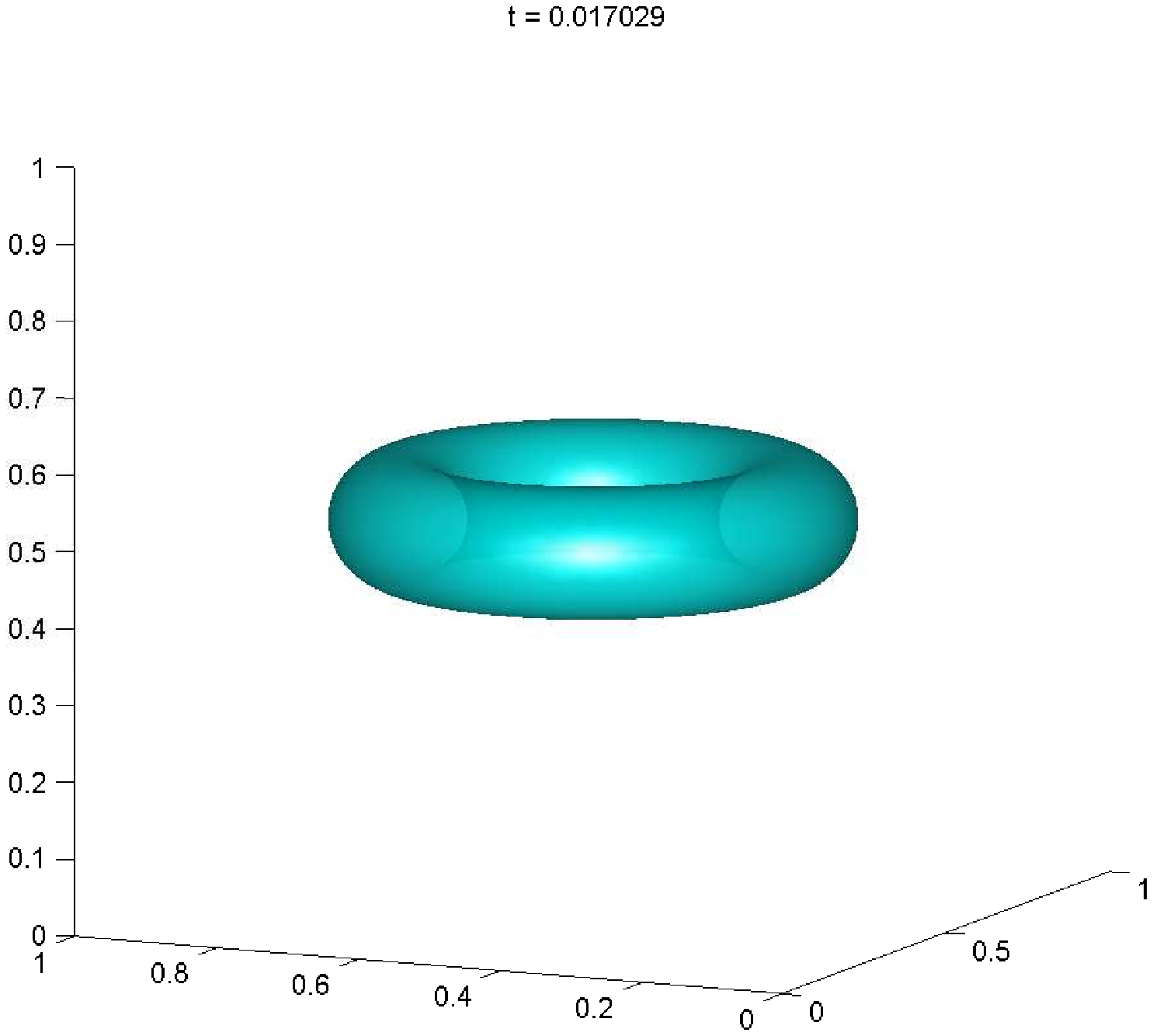}\hfill
\includegraphics[width=0.2\linewidth]{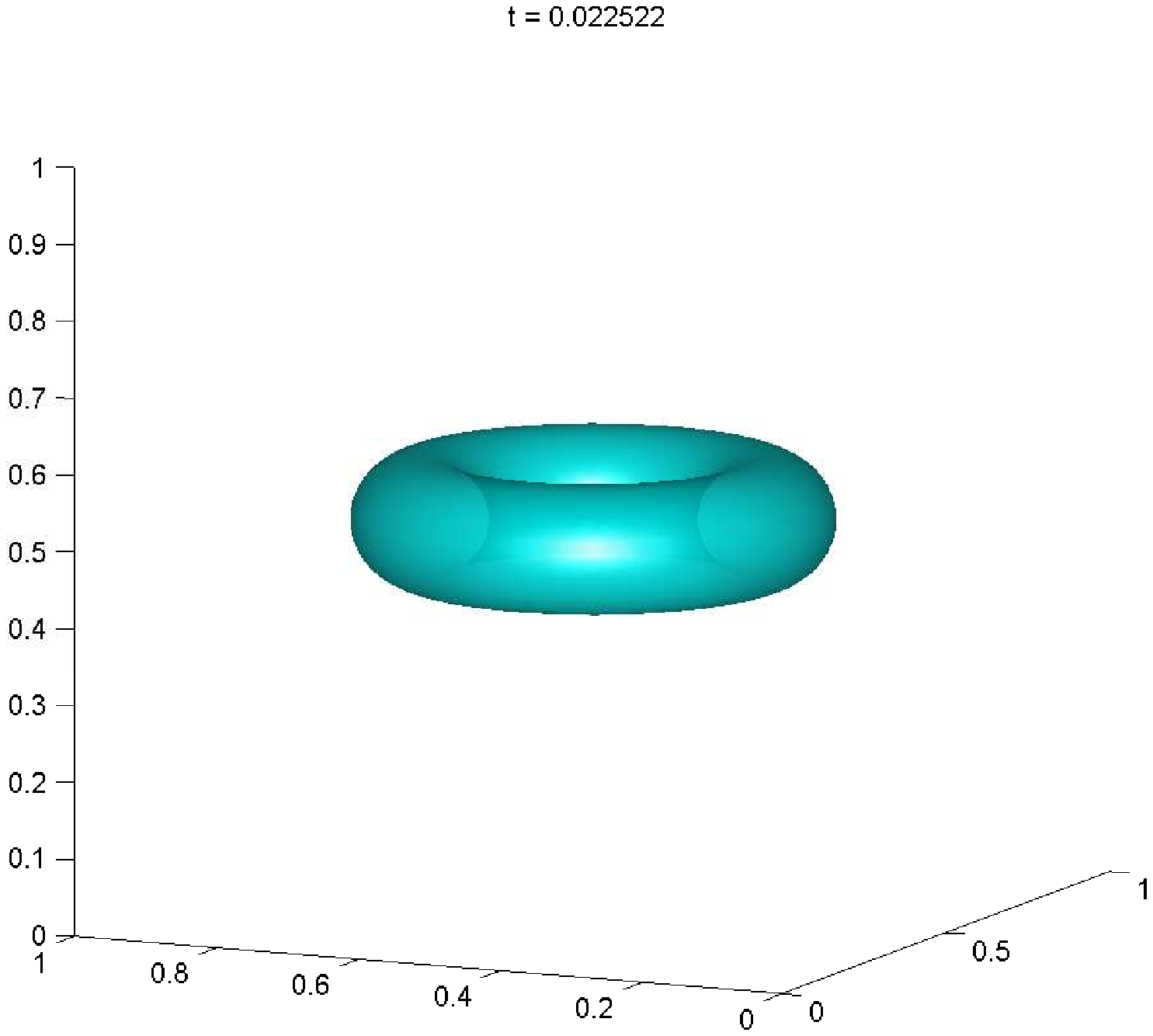}\hfill
\includegraphics[width=0.2\linewidth]{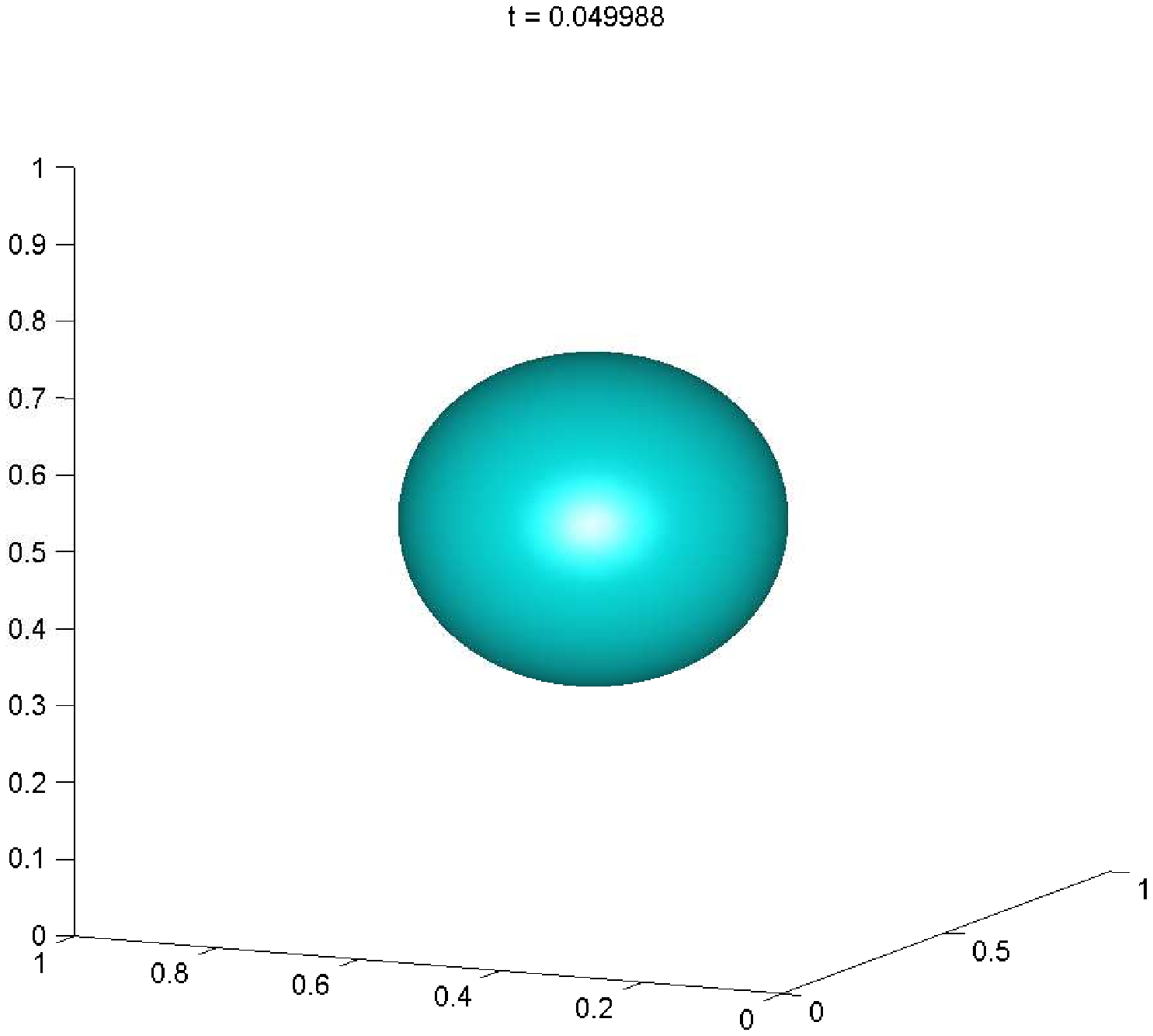}
\hspace*{\fill}
\\
\hspace*{\fill}
\includegraphics[width=0.2\linewidth]{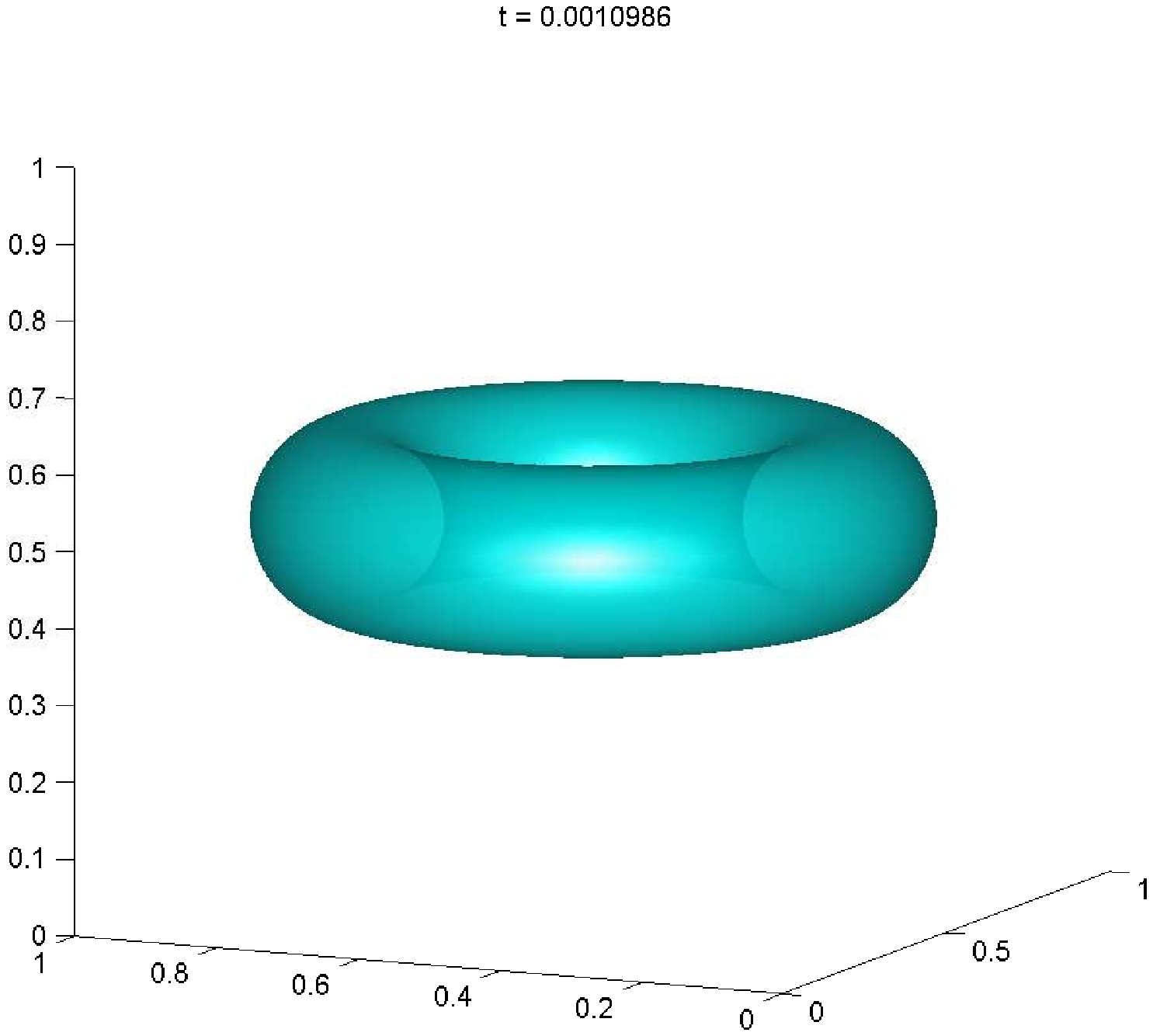}\hfill
\includegraphics[width=0.2\linewidth]{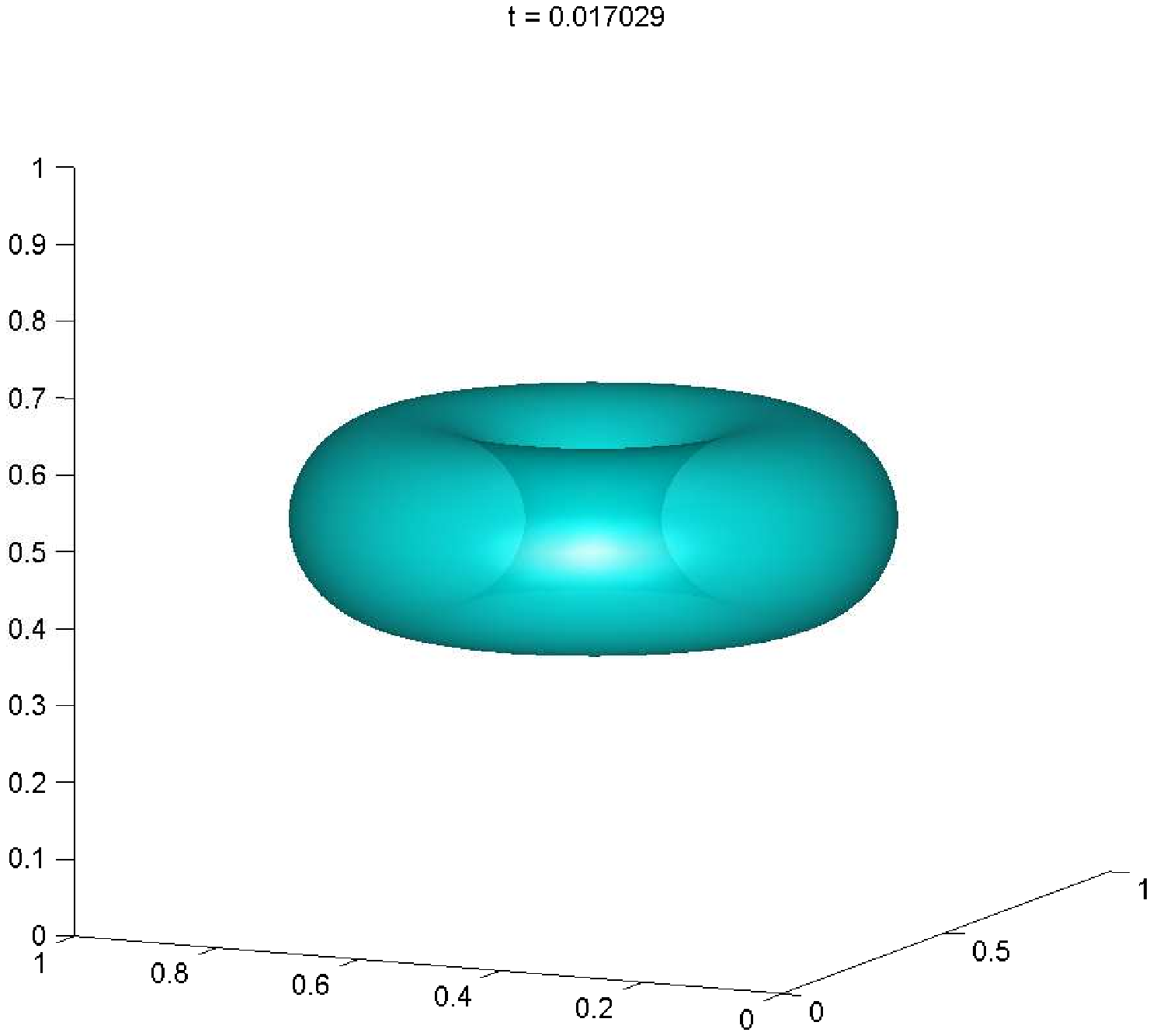}\hfill
\includegraphics[width=0.2\linewidth]{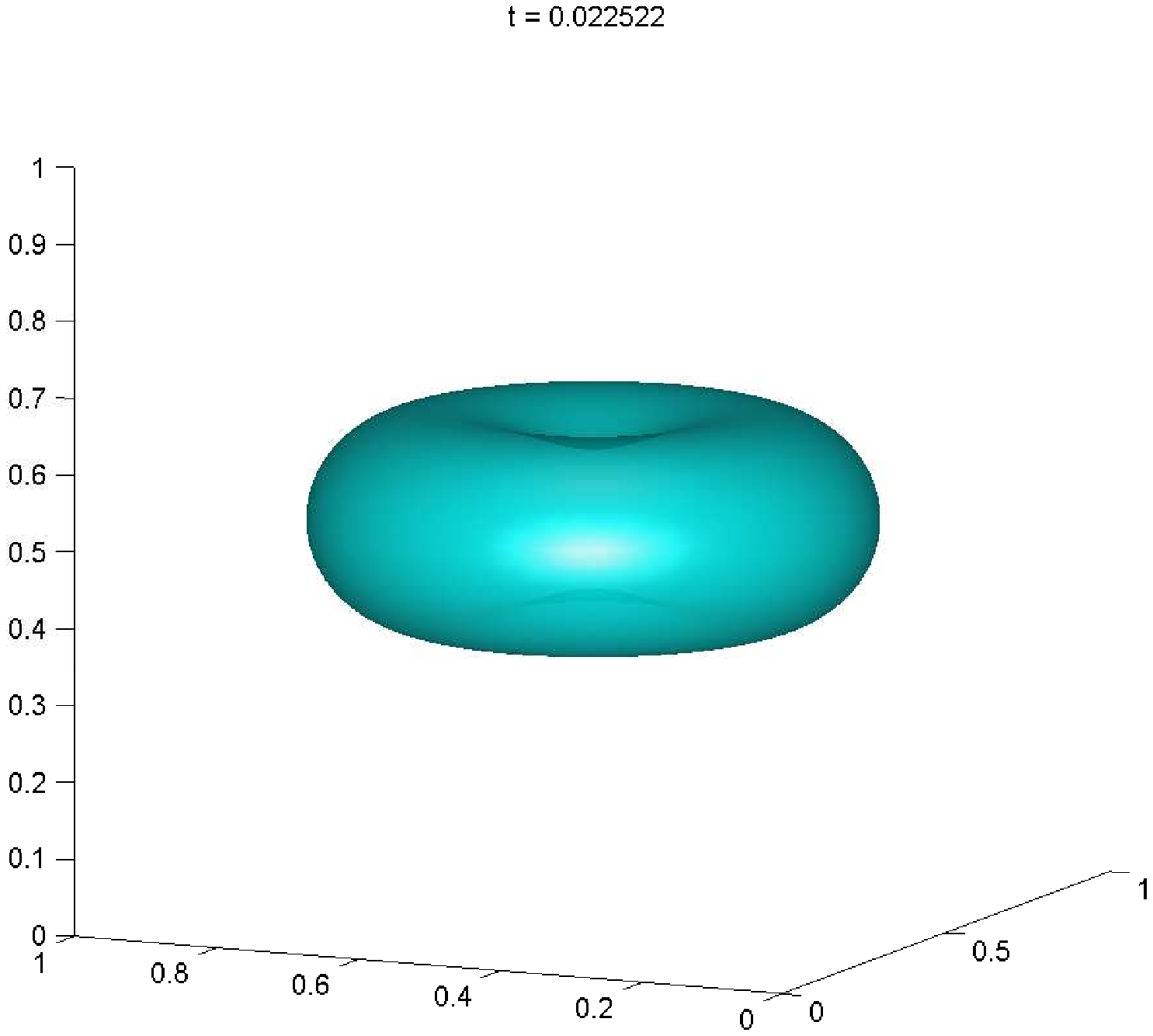}\hfill
\includegraphics[width=0.2\linewidth]{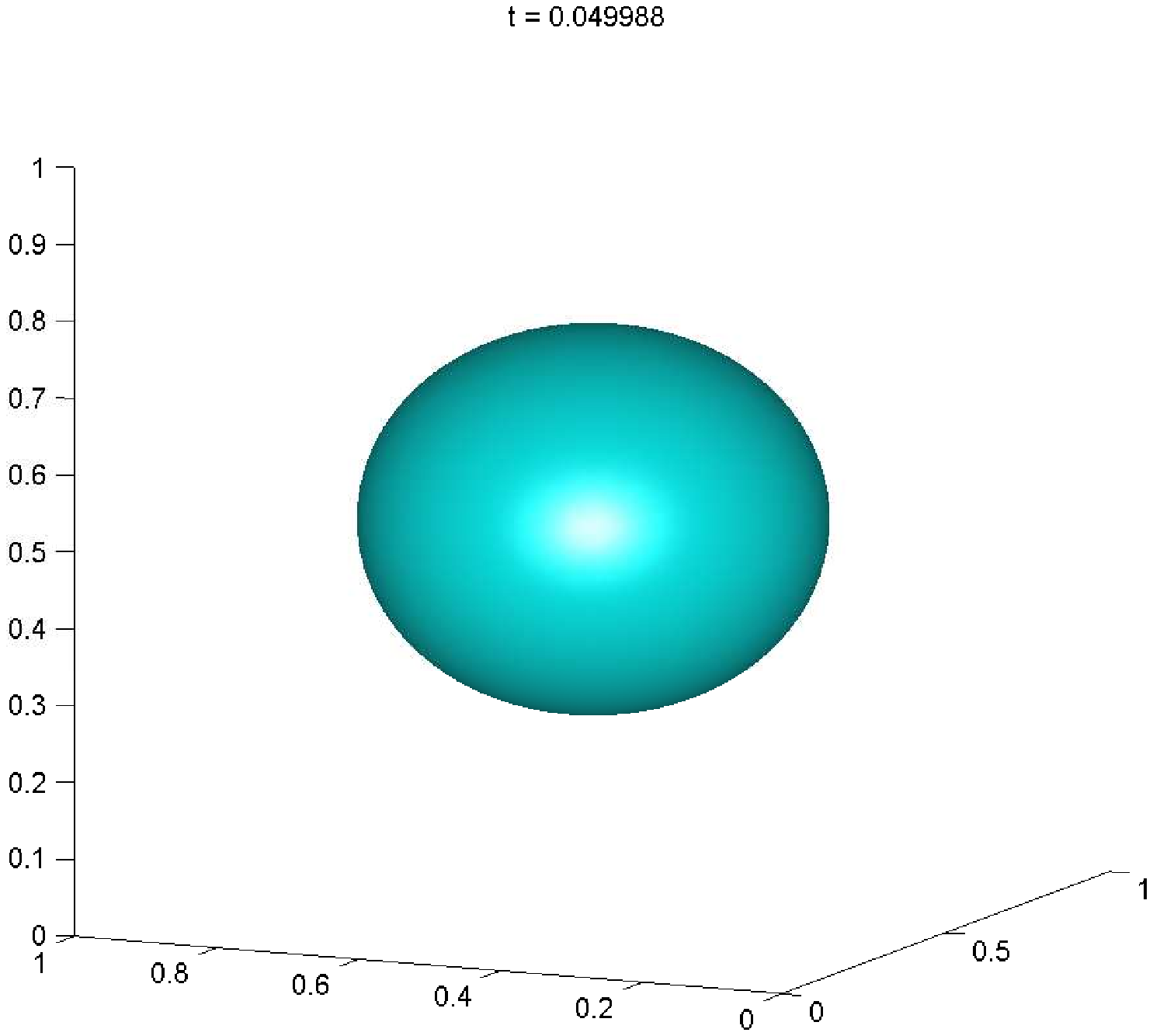}
\hspace*{\fill}
\caption{Evolution of a torus by mean curvature flow with conservation
of the volume. First line: classical model \eqref{eq:classic_model}.
Second line: modified model \eqref{eq:new_model}.}
\label{fig:torus}
\end{figure}

\begin{figure}[htbp]
\centering
\includegraphics[width=0.45\linewidth]{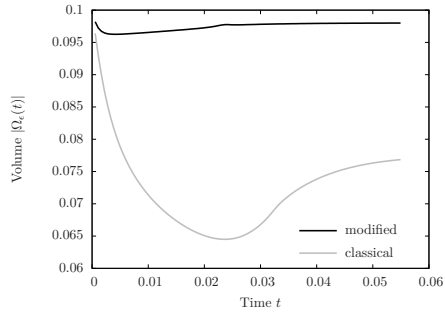}
\caption{Torus example: volume against time for both models
\eqref{eq:classic_model} and \eqref{eq:new_model}.}
\label{fig:vol_torus}
\end{figure}

\section{Conclusion}

We introduced in this article a modified phase field model for the
approximation of mean curvature flow with a forcing term. We rigorously
proved its convergence with the same order as the classical Allen--Cahn
equation: $O(\eps^2 \abs{\log \eps}^2)$.

We formally derived this model to the case of conserved mean curvature
flow. We observed numerically an $O(\eps^2)$ error for the conservation
of the volume, whereas the classical conserved Allen--Cahn equation just
showed an $O(\eps)$ error in our simulations.

\paragraph{Acknowledgements.}
The authors would like to thank Eric Bonnetier and Val\'erie Perrier
for their advice and fruitful discussions.

\bibliographystyle{abbrv}
\bibliography{biblio}

\end{document}